\newtheorem{Thm}{Theorem}[section]
\newtheorem{Cor}[Thm]{Corollary}
\newtheorem{Prop}[Thm]{Proposition}
\theoremstyle{definition}
\newtheorem{Def}[Thm]{Definition}
\newtheorem{Prob}[Thm]{Problem}
\theoremstyle{remark}
\newtheorem{Ex}[Thm]{Example}
\newtheorem{Rmk}[Thm]{Remark}
\numberwithin{equation}{section}
\begin{document}
	
	\title{Recovery of the matrix potential of the one-dimensional Dirac equation from spectral data}
	\author{E. Roque, S.M. Torba \\{\small Departamento de Matem\'{a}ticas, Cinvestav, Unidad Quer\'{e}taro, }\\{\small Libramiento Norponiente \#2000, Fracc. Real de Juriquilla,
			Quer\'{e}taro, Qro., 76230 MEXICO.}\\{\small e-mail: earoque@math.cinvestav.edu.mx, storba@math.cinvestav.edu.mx}}
	
	\maketitle
	
	\begin{abstract}
		A method for solving an inverse spectral problem for the one-dimensional Dirac equation is developed. The method is based on the Gelfand-Levitan equation and the Fourier-Legendre series expansion of the transmutation kernel. A linear algebraic system of equations is obtained, which can be solved numerically. To the best of our knowledge, this is the first practical method for the solution of the inverse problem for the one-dimensional Dirac equation on a finite interval.
	\end{abstract}
	
	\section{Introduction}
	
	We consider the one-dimensional Dirac equation in canonical form \cite{diracbook}
	\begin{equation}\label{eqn:dirac}
		\left[\tilde{\sigma}_1 \frac{1}{i}\frac{d}{dx}+\tilde{\sigma}_2p(x)+\tilde{\sigma}_3q(x)\right]Y \equiv \left[ B\frac{d}{dx}+Q(x)\right]Y=\lambda Y, \quad Y=\begin{pmatrix}
			y_1\\
			y_2
		\end{pmatrix}, \quad 0<x<1,
	\end{equation}
	where
	\begin{equation}
		\tilde{\sigma}_1 = \begin{pmatrix}
			0 & i \\ -i & 0
		\end{pmatrix}, \quad \tilde{\sigma}_2 = \begin{pmatrix}
			1 & 0 \\ 0 & -1
		\end{pmatrix}, \quad \tilde{\sigma}_3=\begin{pmatrix}
			0 & 1 \\ 1 & 0
		\end{pmatrix},
	\end{equation}
	are the Pauli matrices (with the naming convention appearing in \cite{diracbook}), and
	\begin{equation}
		B=\begin{pmatrix}
			0 & 1 \\ -1 & 0
		\end{pmatrix}, \quad
		Q(x)=\begin{pmatrix}
			p(x) & q(x) \\
			q(x) & -p(x)
		\end{pmatrix}, \quad p,q\in L^2([0,1],\mathbb{R}).
	\end{equation}
	We write the Pauli matrices with tilde to distinguish them from the more standard notation \(\sigma_1, \sigma_2, \sigma_3,\) used in other works such as \cite{Grant, Hryniv-radial}, where \( \sigma_1 = \tilde{\sigma}_3, \sigma_2= -\tilde{\sigma}_1, \sigma_3=\tilde{\sigma}_2. \)\\
	
	The inverse problem for the one-dimensional Dirac equation \eqref{eqn:dirac} consists of recovering the matrix potential \(Q\) from the spectral data of a suitable boundary value problem; see the precise statements of problem \ref{prob:bothbc} and problem \ref{prob:onebc}. A multitude of methods to numerically solve the inverse problem for the Schrödinger equation are known; see \cite{RundellSacks,Marletta,IgnatievYurko,Gao,Drignei,kravchenko-gb,KravTorbaInv}  and the references therein. However, to the best of our knowledge, there are no practical methods available for the case of the Dirac equation. The aim of this work is to develop a method of solution for the inverse problems \ref{prob:bothbc} and \ref{prob:onebc} that is suitable for numerical realization. \\
	
	Ever since the seminal work of Gasymov and Levitan \cite{Gasymov1, Gasymov2, Gasymov3}, spectral problems for Dirac-type operators have received significant attention and several works have been written, see \cite{marchenko, levitan, diracbook, Hryniv, HrynivManko, Shkalikov, Lunyov} and the references therein. Moreover, due to its connection with the ZS-AKNS system and its applications to the inverse scattering transform \cite{ZS1,ZS2,HrynivManko,Lunyov, AKNSMatrix}, spectral and nodal problems for Dirac operators are very active research topics up to this day \cite{Makin, WeiNodal, Wei}. \\
	
	The method of solution of the inverse problems \ref{prob:bothbc} and \ref{prob:onebc} proposed in the present work is based on the Gelfand-Levitan equation, which relates a matrix-valued function \(F\) containing the spectral data of the problem with the integral kernel \(K\) of the transmutation operator. Inspired by the method presented in \cite{Krav2019, KravTorbaInv} and the recently obtained Fourier-Legendre series expansion for the transmutation kernel \cite{RoqTorNSBF}, we obtain a system of linear algebraic equations for the Fourier-Legendre coefficients of \(K\).  We then show that the matrix potential \(Q\) can be recovered from the first coefficient. \\
	
	We illustrate that the method developed here is accurate and efficient in various examples, including smooth and non-smooth potentials. The practical solution of the inverse problem opens up the possibility for the recovery of radial Dirac operators. Indeed, in the reconstruction algorithm presented in \cite[p. 9]{Hryniv-radial}, an inverse spectral problem for the radial Dirac equation is reduced to an inverse problem for a regular Dirac operator, see also \cite{Serier}. The present paper is the first step for extending several methods available for inverse problems for the Schrödinger equation \cite{kravchenko-gb, krav-icp} to the Dirac equation  as well as for the realization of the inverse scattering transform method for the ZS-AKNS system. \\
	
	The paper is organized as follows. In section \ref{sec:prelim} we recall several results regarding the integral representation of solutions in terms of transmutation operators. Furthermore, we present the characterization of the spectral data, and we state the inverse problems considered in this work. In section \ref{sec:nsbf} we expand each of the kernels \(K_C(x,t)\), \(K_S(x,t)\) and \(K_\alpha(x,t)\) of the corresponding transmutation operators in a matrix Fourier-Legendre series. Additionally, we derive the Neumann series of Bessel functions (NSBF) representation for different solutions. In section \ref{sec:GL}, we state the Gelfand-Levitan equation for the kernels \(K_C(x,t)\), \(K_S(x,t)\), and \(K_\alpha(x,t)\). The Fourier-Legendre matrix expansions of the kernels are used to obtain the main result of this work: a linear system of algebraic equations for their respective coefficients. We prove that the matrix potential \(Q\) can be recovered from the first coefficient of the matrix Fourier-Legendre series of the transmutation kernels. Finally, numerical examples are presented in section \ref{sec:numerical}. Appendix \ref{section:appendix-bg} summarizes known results regarding the Bubnov-Galerkin method and its stability for the overall readability of the paper.
	
	\section{Preliminaries}\label{sec:prelim}
	Throughout this work, we denote by \( C(\lambda, x)=(C_1(\lambda, x), C_2(\lambda, x))^T \) the solution of \eqref{eqn:dirac} with initial condition \( C(\lambda, 0) = (1,0)^T \). Additionally, we denote by \( S(\lambda, x)=(S_1(\lambda, x), S_2(\lambda, x))^T \) the solution of \eqref{eqn:dirac} with initial condition \( S(\lambda, 0) = (0,1)^T \). In the case we have the null potential \(Q \equiv 0\), we have a pair of fundamental solutions given by \( C_0(\lambda, x):=( \cos \lambda x, \sin \lambda x)^T \) and \( S_0(\lambda, x)=(-\sin \lambda x, \cos \lambda x)^T \) that satisfy the same initial conditions as \(C(\lambda,x)\) and \(S(\lambda,x)\) respectively.
	
	\paragraph*{Notations}
	By \( \mathcal{M}_2 \) we denote the algebra of \( 2\times 2 \) matrices with complex valued entries endowed with the operator norm \(\vert \cdot \vert \) induced by the Euclidean norm of the space \( \mathbb{C}^2 \), see \cite{matrix} for details regarding matrix norms and their properties. For \(A \in \mathcal{M}_2\), we write \(A^T\) and \(A^*\) for the transpose and the conjugate transpose of \(A\) respectively. Moreover, \( L^2([a,b],\mathcal{M}_2) \) denotes the space of \( \mathcal{M}_2 \)-valued functions with finite norm
	\begin{equation}\label{eqn:norm2}
		\Vert A \Vert_{L^2([a,b])}:= \left( \int_{a}^{b} \left\vert A(t) \right\vert^2 dt \right)^{1/2}.
	\end{equation}
	\begin{Rmk}
		It can be verified that the matrix operator norm induced by the Euclidean norm of the space \( \mathbb{C}^2 \) satisfies
		\begin{equation}
			\Big\vert \int_a^b A(t)\; dt \Big\vert \leq \int_{a}^b \vert A(t) \vert dt.
		\end{equation}
	\end{Rmk}
	Additionally, \( W^{p,2}([a,b],\mathcal{M}_2)\) denotes the space of all matrix-valued functions \(A\) whose derivatives of order at most \(p-1\) are absolutely continuous functions and \(A^{(p)}\in L^2([a,b],\mathcal{M}_2) \). Also, we set
	\begin{equation}
		\mathcal{A}_Q:=B \frac{d}{dx} +Q(x), \quad \mathcal{A}_0:=B \frac{d}{dx} .
	\end{equation}
	
	The reader is invited to consult the monographs \cite{levitan, marchenko, diracbook} for basic definitions and results about transmutation operators and spectral theory of differential operators. The existence of transmutation operators for the Dirac operator and different classes of potentials has been found in several publications, for instance see \cite{levitan, diracbook, Hryniv, Nelson-analytic}. The following theorem is a particular case of the results presented in \cite{diracTOP}, see also \cite{diracbook, Nelson-analytic, RoqTorNSBF}.
	\begin{Thm}[\cite{diracTOP,diracbook}] \label{thm:TOP}
		Let  \( Q\in L^2([0,1], \mathcal{M}_2) \) and let \(I\) denote the \(2\times 2\) identity matrix. Then there exists a matrix-valued function \(K(x,t)\) such that the matrix solution \( U(\lambda,x)\) of the Cauchy problem
		\begin{equation}\label{eqn:U}
			\mathcal{A}_Q U = \lambda U, \quad U(\lambda,0)=I,
		\end{equation}
		can be represented in the form
		\begin{equation}
			U(\lambda,x) = e^{-B\lambda x } + \int_{-x}^x K(x,t) e^{-B\lambda t} dt,
		\end{equation}
		and the relations
		\begin{align}
			K(x,x)B-BK(x,x)&=Q(x), \\
			K(x,-x)B+BK(x,-x)&=0,
		\end{align}
		hold for almost all \(x \in [0,1].\) Moreover, \(K(x,\cdot) \in L^2([-x,x],\mathcal{M}_2)\).
	\end{Thm}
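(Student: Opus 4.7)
The plan is to reduce the existence question to a Goursat-type boundary value problem for the kernel $K(x,t)$, then solve it by the method of characteristics and Picard iteration in a suitable $L^2$ setting.

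First I would substitute the ansatz into $BU_x + Q(x)U = \lambda U$. Differentiation under the integral produces Leibniz boundary terms $K(x,x)e^{-B\lambda x}$ and $K(x,-x)e^{B\lambda x}$. On the right-hand side, the identity $\lambda e^{-B\lambda t} = B\partial_t e^{-B\lambda t}$ (a direct consequence of $B^2=-I$) allows integration by parts in $t$, transferring the factor of $\lambda$ onto $K$ and producing additional boundary contributions at $t=\pm x$. Equating the coefficients of the three resulting expression types, namely the integral over $[-x,x]$ and the boundary exponentials at $t=\pm x$, yields the Goursat problem
\begin{equation*}
B K_x(x,t) + K_t(x,t) B + Q(x) K(x,t) = 0, \qquad -x<t<x,
\end{equation*}
together with the two characteristic conditions stated in the theorem. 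The initial condition $U(\lambda,0) = I$ is automatic since the integration interval is empty at $x=0$.

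Second, I would diagonalize $B$ using the spectral projectors $P_\pm = \tfrac{1}{2}(I \mp iB)$ onto the $\pm i$-eigenspaces. Decomposing $K = \sum_{a,b\in\{+,-\}} P_a K P_b$ and using $BP_a = ai\,P_a$ and $P_b B = bi\,P_b$, the PDE splits into four equations
\begin{equation*}
i(a+b)\,\partial_\xi K_{ab} + i(a-b)\,\partial_\eta K_{ab} = -P_a(QK)P_b
\end{equation*}
in the characteristic coordinates $\xi=x+t$, $\eta=x-t$. Thus the components with $a=b$ are transported in $\xi$ only, and the components with $a\neq b$ in $\eta$ only. Projecting the boundary relations in the same way, the condition at $t=-x$ (where $\xi=0$) forces the $a=b$ components to vanish there, while the condition at $t=x$ (where $\eta=0$) prescribes the $a\neq b$ components in terms of $Q$; the two remaining projections are automatically satisfied because $Q$ anticommutes with $B$, so that $P_\pm Q P_\pm = 0$. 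Integrating each component along its characteristic from the appropriate boundary yields a coupled Volterra integral system of the schematic form $K = K_0 + T[K]$, where both $K_0$ and $T$ are linear in $Q$.

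Third, I would solve this system by Picard iteration in the Banach space $X = \{K : \sup_{x\in[0,1]} \|K(x,\cdot)\|_{L^2([-x,x])} < \infty\}$. The Volterra structure implies that the $n$-th iterate is integrated over $n$ nested characteristic triangles of geometrically shrinking area; applying the Cauchy-Schwarz inequality at each step to pair the factor $Q(s)$ against the remaining integration volume produces a bound of the form $\|T^n K_0\|_X \le C^n \|Q\|_{L^2}^n/\sqrt{n!}$, so the Neumann series converges absolutely. Uniqueness follows from the same contraction estimate, and substituting the resulting $K$ back into the ansatz verifies that $U$ solves the Cauchy problem with the required boundary relations. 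The main obstacle is step three under the hypothesis $Q\in L^2$ rather than $C^1$: classical pointwise estimates on the iterates are unavailable, so one must work throughout with the integrated Volterra equation and exploit the Cauchy-Schwarz gain inside the nested triangles to produce the factorial decay. A subsidiary care point, already present in steps one and two, is the non-commutativity of $Q$ with $B$: carefully distinguishing left and right multiplication, and correspondingly the left and right spectral projectors, is essential for the diagonalization to decouple the system cleanly.
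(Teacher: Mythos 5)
The paper does not prove Theorem \ref{thm:TOP}; it is quoted from \cite{diracTOP,diracbook}, so there is no in-paper argument to compare yours against. Judged on its own, your outline is a correct reconstruction of the standard proof given in those references. The reduction to the Goursat problem \(BK_x+K_tB+QK=0\) with the two characteristic conditions is right: since \(B^2=-I\), the term \(-B^2\lambda e^{-B\lambda x}\) becomes \(+\lambda e^{-B\lambda x}\), and integration by parts via \(\lambda e^{-B\lambda t}=B\,\partial_t e^{-B\lambda t}\) produces exactly the two boundary matrices appearing in the statement. Your use of the one-sided spectral projectors \(P_\pm=\tfrac12(I\mp iB)\) is a cleaner version of what the cited sources do entrywise; it refines the two-sided splitting \(\mathcal{P}^{\pm}[A]=\tfrac12(A\pm BAB)\) that the paper itself introduces after Theorem \ref{thm:Kalpha}, and your observation that \(P_aQP_a=0\) because \(Q\) anticommutes with \(B\) is precisely what makes the two remaining projections of the boundary data consistent rather than overdetermined. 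Two points deserve slightly more care than the sketch gives them, though neither is a gap in the idea. First, for existence the Goursat conditions are imposed rather than derived, so only the verification direction (substituting the constructed \(K\) back into the ansatz) is logically needed, and you do state it; the necessity direction would additionally require a completeness argument for the family \(e^{-B\lambda t}\). Second, for \(Q\in L^2\) the zeroth Volterra iterate is itself only an \(L^2\) function of the characteristic variable, so the factorial-decay induction must be run on the mixed norm \(\sup_x\Vert K(x,\cdot)\Vert_{L^2([-x,x])}\) with a weight such as \(\sigma(x)=\int_0^x\vert Q\vert\) (finite since \(L^2([0,1])\subset L^1([0,1])\)); this is also why the traces \(K(x,\pm x)\) exist only for a.e.\ \(x\), in agreement with the statement. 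You correctly identify both of these as the delicate points, so I regard the proposal as a sound outline of the standard argument.
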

	\begin{Rmk}\label{rmk:U0}
		Notice that
		\begin{equation}
			U_0(\lambda,x):= e^{-B\lambda x } = \begin{pmatrix}
				\cos \lambda x & -\sin \lambda x \\
				\sin \lambda x & \cos \lambda x
			\end{pmatrix},
		\end{equation}
		is the matrix solution of the initial value problem \( \mathcal{A}_0 U_0= \lambda U_0, \; U_0(\lambda,0)=I\). Moreover, the first and second column of the matrix \(U_0(\lambda, x)\) coincide with \(C_0(\lambda,x)\) and \(S_0(\lambda, x)\) respectively.
	\end{Rmk}
	Transmutation operators with upper triangular kernels play a key role in the solution of inverse spectral problems.
	Denote by  \( \varphi_\alpha(\lambda,x)\) the solution to \eqref{eqn:dirac} that satisfies the initial condition
	\begin{equation}
		\varphi_\alpha(\lambda,0) = \begin{pmatrix}
			\sin \alpha \\
			-\cos \alpha
		\end{pmatrix}.
	\end{equation}
	Then, we have the following assertion regarding the existence of a transmutation operator with upper-triangular kernel which gives an integral representation of the solution \( \varphi_\alpha \).
	\begin{Thm}[{\cite[Thm 2.2, Thm 2.3]{diracbook}}]\label{thm:Kalpha}
		Let us assume that \(Q \in L^2([0,1], \mathcal{M}_2)\). Then,
		the solution \(\varphi_\alpha(\lambda,x)\) can be represented in the form
		\begin{equation}\label{eqn:TOPKalpha}
			\varphi_\alpha(\lambda,x) = \varphi_{\alpha,0}(\lambda,x)  + \int_{0}^x K_\alpha(x,t) \varphi_{\alpha,0}(\lambda,t) dt,
		\end{equation}
		where
		\begin{equation}
			\varphi_{\alpha,0}(\lambda,x) = \begin{pmatrix}
				\sin (\lambda x +\alpha) \\
				-\cos ( \lambda x + \alpha)
			\end{pmatrix},
		\end{equation}
		and \( K_\alpha(x,\cdot) \in L^2(0,x).\) Moreover,
		\begin{equation}\label{eqn:Kalpha}
			K_\alpha(x,t)=K(x,t)-K(x,-t)\left(\tilde{\sigma}_2 \cos 2\alpha +\tilde{\sigma}_3 \sin 2\alpha \right),
		\end{equation}
		and for almost all \(x \in [0,a]\)
		\begin{equation}\label{eqn:KAalphxx}
			K_{\alpha,A}(x,x)B-BK_{\alpha,A}(x,x)=Q(x),
		\end{equation}
		where \(K_{\alpha,A}\) denotes the part of \(K_\alpha\) that anticommutes with \(B\). In case of a continuous potential, this is equivalent to
		\begin{equation}\label{eqn:Kalphxx}
			K_{\alpha}(x,x)B-BK_{\alpha}(x,x)=Q(x).
		\end{equation}
	\end{Thm}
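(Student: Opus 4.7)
The plan is to reduce the statement about $\varphi_\alpha$ to the matrix representation of Theorem~\ref{thm:TOP}. First I would observe that the initial condition $\varphi_\alpha(\lambda,0)$ is the constant vector $v_\alpha:=(\sin\alpha,-\cos\alpha)^T$, so by uniqueness of solutions of the Cauchy problem \eqref{eqn:U} one has $\varphi_\alpha(\lambda,x)=U(\lambda,x)v_\alpha$. A direct computation using Remark~\ref{rmk:U0} shows that $\varphi_{\alpha,0}(\lambda,x)=U_0(\lambda,x)v_\alpha=e^{-B\lambda x}v_\alpha$. Multiplying the integral representation of $U$ from Theorem~\ref{thm:TOP} by $v_\alpha$ on the right then yields
\[
\varphi_\alpha(\lambda,x)=\varphi_{\alpha,0}(\lambda,x)+\int_{-x}^{x}K(x,t)\,\varphi_{\alpha,0}(\lambda,t)\,dt.
\]

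Next I would convert this into an integral over $[0,x]$ by splitting at $t=0$ and substituting $t\mapsto -t$ in the negative half. The substitution relies on the algebraic identity
\[
\varphi_{\alpha,0}(\lambda,-t)=-\bigl(\tilde{\sigma}_2\cos 2\alpha+\tilde{\sigma}_3\sin 2\alpha\bigr)\varphi_{\alpha,0}(\lambda,t),
\]
which I would verify by writing both sides as linear combinations of $\cos\lambda t$ and $\sin\lambda t$ and matching coefficients (equivalently, the matrix on the right is the reflection with orthogonal eigenvectors $v_\alpha$ and $(\cos\alpha,\sin\alpha)^T$ and eigenvalues $1,-1$). Inserting this identity produces exactly \eqref{eqn:TOPKalpha} with $K_\alpha$ given by \eqref{eqn:Kalpha}.

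For the boundary relation I would exploit the fact that $\tilde{\sigma}_2$ and $\tilde{\sigma}_3$ both anticommute with $B$ (a one-line $2\times 2$ check), hence so does $\Sigma:=\tilde{\sigma}_2\cos 2\alpha+\tilde{\sigma}_3\sin 2\alpha$. In the continuous case I would evaluate \eqref{eqn:Kalpha} at $t=x$ and compute
\[
K_\alpha(x,x)B-BK_\alpha(x,x)=\bigl(K(x,x)B-BK(x,x)\bigr)-\bigl(K(x,-x)\Sigma B-BK(x,-x)\Sigma\bigr),
\]
then use $\Sigma B=-B\Sigma$ to rewrite the second bracket as $-\bigl(K(x,-x)B+BK(x,-x)\bigr)\Sigma$, which vanishes by the second relation in Theorem~\ref{thm:TOP}. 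The first bracket equals $Q(x)$, yielding \eqref{eqn:Kalphxx}.

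The delicate step is \eqref{eqn:KAalphxx} in the $L^2$ setting, where the diagonal values $K(x,\pm x)$ need not be pointwise defined. For this I would use the direct-sum decomposition $\mathcal{M}_2=\mathcal{C}(B)\oplus\mathcal{A}(B)$ into matrices commuting, respectively anticommuting, with $B$, realized by the projectors $M\mapsto\tfrac12(M-BMB)$ and $M\mapsto\tfrac12(M+BMB)$ (using $B^2=-I$). A short check shows that right multiplication by $\Sigma$ swaps the two summands, i.e.\ $(M\Sigma)_A=M_C\Sigma$. Applied to \eqref{eqn:Kalpha} this gives
\[
K_{\alpha,A}(x,x)=K(x,x)_A-K(x,-x)_C\,\Sigma.
\]
Decomposing the two relations of Theorem~\ref{thm:TOP} into their commuting/anticommuting components yields (i) $K(x,x)_A B-BK(x,x)_A=Q(x)$ and (ii) $K(x,-x)_C=0$, from which \eqref{eqn:KAalphxx} follows at once. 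The principal obstacle I anticipate is precisely this bookkeeping with the commuting/anticommuting splitting; once it is set up, every remaining step is a short calculation.
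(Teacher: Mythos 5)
The paper offers no proof of Theorem \ref{thm:Kalpha}: it is imported from \cite{diracbook} (Theorems 2.2 and 2.3 there), so there is no internal argument to compare yours against. That said, your derivation is correct and self-contained given Theorem \ref{thm:TOP}: the identification $\varphi_\alpha=Uv_\alpha$, the reflection identity $\varphi_{\alpha,0}(\lambda,-t)=-C_\alpha\varphi_{\alpha,0}(\lambda,t)$ with $C_\alpha=\tilde{\sigma}_2\cos 2\alpha+\tilde{\sigma}_3\sin 2\alpha$, and the folding of the integral over $[-x,x]$ onto $[0,x]$ yield exactly \eqref{eqn:Kalpha}; and your handling of \eqref{eqn:KAalphxx} via the projectors $\mathcal{P}^{\pm}$ (which the paper itself introduces in the remark immediately following the theorem) is the right way to treat the $L^2$ case: decomposing the two diagonal relations of Theorem \ref{thm:TOP} into commuting and anticommuting parts does give $K(x,x)_AB-BK(x,x)_A=Q(x)$ and $K(x,-x)_C=0$, and right multiplication by $C_\alpha$ swaps the two summands, so $K_{\alpha,A}(x,x)=K(x,x)_A$ and \eqref{eqn:KAalphxx} follows; the continuous case \eqref{eqn:Kalphxx} is the same computation without the projection. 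Your route is also consistent with what the paper does downstream, since the parity manipulation $P_n(-t)=(-1)^nP_n(t)$ in Section 3 presupposes precisely formula \eqref{eqn:Kalpha}. One trivial slip in a parenthetical: the eigenvalue pairing for $C_\alpha$ is reversed --- in fact $C_\alpha v_\alpha=-v_\alpha$ and $C_\alpha(\cos\alpha,\sin\alpha)^T=(\cos\alpha,\sin\alpha)^T$, as is forced by your own identity at $t=0$; this does not affect the argument, whose primary verification by matching coefficients is correct.
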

	\begin{Rmk}
		Recall that given a \(2\times 2\) matrix-valued function \(A\), we can consider the operators \(\mathcal{P}^+\) and \(\mathcal{P}^-\) defined as \cite{Nelson-analytic}:
		\begin{equation}
			\mathcal{P}^-[A]:=\frac{1}{2}(A-BAB), \quad \mathcal{P}^+[A]:=\frac{1}{2}(A+BAB).
		\end{equation}
		Then, it follows that \(\mathcal{P}^-\) and \(\mathcal{P}^+\) are projectors, and
		\[
		B\mathcal{P}^-[A] = \mathcal{P}^-[A]B, \quad B\mathcal{P}^+[A]=-\mathcal{P}^+[A]B.
		\]
		Hence, \(K_{\alpha,A}(x,x)=\mathcal{P}^+[K_\alpha(x,x)]\).
	\end{Rmk}
	There are two values of \(\alpha\) which are of particular importance, \(\alpha = -\pi/2 \) and \(\alpha =0.\)   An immediate consequence of theorem \ref{thm:Kalpha} is the following.
	\begin{Cor}
		The pair of fundamental solutions \( C(\lambda,x)\) and \(S(\lambda,x)\) can be represented in the form
		\begin{align}
			C(\lambda,x) &=C_0(\lambda, x) + \int_{0}^{x}K_C(x,t)C_0(\lambda, t)dt, \label{eqn:Kc} \\
			S(\lambda,x) &= S_0(\lambda,x)+\int_{0}^{x} K_S(x,t)S_0(\lambda,t)dt \label{eqn:Ks},
		\end{align}
		where
		\begin{align}
			K_C(x,t)&=K(x,t)+K(x,-t)\tilde{\sigma}_2,  \\
			K_S(x,t)&=K(x,t)-K(x,-t)\tilde{\sigma}_2.
		\end{align}
		The kernel \(K_C\) corresponds to the case \(\alpha = -\frac{\pi}{2}\) in \eqref{eqn:Kalpha}, whereas \(K_S\) corresponds to the case \(\alpha=0\).
	\end{Cor}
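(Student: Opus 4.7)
The plan is to reduce both formulas to instances of Theorem \ref{thm:Kalpha} by choosing the two special angles indicated in the statement, and then to track how the signs coming from the initial condition vector $(\sin\alpha,-\cos\alpha)^T$ line up with the signs appearing in $\varphi_{\alpha,0}$. First I would compute $\varphi_\alpha(\lambda,0)$ and $\varphi_{\alpha,0}(\lambda,x)$ for $\alpha=-\pi/2$ and $\alpha=0$:
\begin{align*}
\varphi_{-\pi/2}(\lambda,0) &= (-1,0)^T = -C(\lambda,0), & \varphi_{-\pi/2,0}(\lambda,x) &= (-\cos\lambda x,\,-\sin\lambda x)^T = -C_0(\lambda,x),\\
\varphi_{0}(\lambda,0) &= (0,-1)^T = -S(\lambda,0), & \varphi_{0,0}(\lambda,x) &= (\sin\lambda x,\,-\cos\lambda x)^T = -S_0(\lambda,x).
\end{align*}

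Next, by uniqueness of the Cauchy problem for \eqref{eqn:dirac}, I would conclude $\varphi_{-\pi/2}(\lambda,x)=-C(\lambda,x)$ and $\varphi_{0}(\lambda,x)=-S(\lambda,x)$. Substituting these identities into \eqref{eqn:TOPKalpha} and canceling the global minus signs on both sides of each resulting equation yields
\[
C(\lambda,x) = C_0(\lambda,x) + \int_0^x K_{-\pi/2}(x,t)\, C_0(\lambda,t)\,dt,\qquad S(\lambda,x) = S_0(\lambda,x) + \int_0^x K_{0}(x,t)\, S_0(\lambda,t)\,dt.
\]
This already establishes \eqref{eqn:Kc} and \eqref{eqn:Ks} with $K_C=K_{-\pi/2}$ and $K_S=K_0$.

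Finally, to exhibit the explicit formulas for $K_C$ and $K_S$ in terms of $K$, I would specialize \eqref{eqn:Kalpha}. For $\alpha=-\pi/2$ we have $\cos 2\alpha=-1$ and $\sin 2\alpha=0$, so
\[
K_{-\pi/2}(x,t) = K(x,t) - K(x,-t)(-\tilde{\sigma}_2) = K(x,t) + K(x,-t)\tilde{\sigma}_2,
\]
and for $\alpha=0$ we have $\cos 2\alpha=1$ and $\sin 2\alpha=0$, so
\[
K_{0}(x,t) = K(x,t) - K(x,-t)\tilde{\sigma}_2,
\]
which matches the stated expressions for $K_C$ and $K_S$ respectively. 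There is no real obstacle here; the only care needed is bookkeeping of the minus signs arising from $\varphi_\alpha(\lambda,0)$ versus $C(\lambda,0)$ and $S(\lambda,0)$, which cancel cleanly because $\varphi_{\alpha,0}$ carries the same sign as $\varphi_\alpha(\lambda,0)$.
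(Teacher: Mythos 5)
Your proof is correct and follows exactly the route the paper intends: the corollary is stated as an immediate consequence of Theorem \ref{thm:Kalpha}, obtained by specializing to $\alpha=-\pi/2$ and $\alpha=0$, identifying $\varphi_{-\pi/2}=-C$ and $\varphi_0=-S$ by uniqueness of the Cauchy problem, and evaluating $C_\alpha=\tilde{\sigma}_2\cos 2\alpha+\tilde{\sigma}_3\sin 2\alpha$ at those angles. All of your sign bookkeeping checks out.
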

	Now, let us consider the following boundary value problem, which we will denote \( L(p,q,\alpha,\beta)\).
	\begin{align}
		&\mathcal{A}_Q Y = \lambda Y,  \\
		&y_1(0) \cos \alpha+  y_2(0) \sin \alpha=0, \label{eqn:bc0} \\
		&y_1(1) \cos \beta +  y_2(1) \sin \beta=0. \label{eqn:bc1}
	\end{align}
	The next results summarize the characterization of the spectral data for the problem \( L(p,q,\alpha,\beta)\).
	\begin{Thm}[\cite{diracbook, levitan, Hryniv}]\label{thm:asym-lambda}
		Let \( Q \in L^2([0,1], \mathcal{M}_2)\) be a real matrix-valued potential, and let \( \alpha, \beta \in \mathbb{R} \). Then, the problem \( L(p,q,\alpha,\beta)\) has a countable set of eigenvalues \( \lambda_m = \lambda_{m}(p,q,\alpha,\beta), \, m \in \mathbb{Z}\), which forms an unbounded sequence
		\begin{equation}\label{eqn:lasymL2}
			\lambda_{m} = m\pi + (\beta - \alpha) + h_m, \quad \{ h_m \}_{m \in \mathbb{Z}}\in l^2.
		\end{equation}
		Moreover, if \( Q \in W^{k,2}([0,1], \mathcal{M}_2)\), the following asymptotics hold
		\begin{equation}\label{eqn:asym-eig}
			\lambda_{m} = m \pi +(\beta - \alpha) + \frac{c_1}{m} + \frac{c_2}{m^2} + \ldots + \frac{c_k}{m^k} + o\left(\frac{1}{m^k}\right), \quad \vert m \vert \to \infty
		\end{equation}
		where \( c_1, c_2, \ldots, c_k\) are some constants.
	\end{Thm}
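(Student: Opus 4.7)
The plan is to reduce the eigenvalue problem $L(p,q,\alpha,\beta)$ to a scalar transcendental equation $\Delta(\lambda)=0$ by imposing \eqref{eqn:bc1} on $\varphi_\alpha(\lambda,x)$, which by construction already satisfies \eqref{eqn:bc0}. Using the integral representation \eqref{eqn:TOPKalpha} together with $\varphi_{\alpha,0}(\lambda,1)=(\sin(\lambda+\alpha),-\cos(\lambda+\alpha))^T$, a direct computation gives
\begin{equation}
\Delta(\lambda) = \sin(\lambda+\alpha-\beta) + \int_0^1 a(t)\sin(\lambda t+\alpha)\,dt + \int_0^1 b(t)\cos(\lambda t+\alpha)\,dt,
\end{equation}
where $a,b\in L^2(0,1)$ are explicit linear combinations of the entries of $K_\alpha(1,\cdot)$ with weights $\cos\beta$ and $\sin\beta$. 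Since $Q$ is real and the boundary conditions are of self-adjoint form, the eigenvalues of $L(p,q,\alpha,\beta)$ are precisely the real zeros of $\Delta$.

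To localize them I would apply Rouch\'e's theorem on circles of fixed radius (say $\pi/4$) around the unperturbed zeros $\mu_m:=m\pi+(\beta-\alpha)$ of the leading term $\sin(\lambda+\alpha-\beta)$. On such circles there is a uniform lower bound for $|\sin(\lambda+\alpha-\beta)|$, while by the Riemann-Lebesgue lemma (extended to horizontal strips) the two integral terms in $\Delta$ tend to $0$ as $|m|\to\infty$. This yields exactly one simple zero $\lambda_m$ inside each circle for $|m|$ sufficiently large, and a standard compactness argument on a large rectangle accounts for the finitely many remaining eigenvalues, giving the crude asymptotic $\lambda_m=\mu_m+o(1)$.

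Writing $\lambda_m=\mu_m+h_m$ and substituting into $\Delta(\lambda_m)=0$, the identity $\sin(\lambda_m+\alpha-\beta)=(-1)^m\sin h_m$ combined with $h_m=o(1)$ and Taylor's theorem yields
\begin{equation}
(-1)^m h_m = -\int_0^1 a(t)\sin(\lambda_m t+\alpha)\,dt - \int_0^1 b(t)\cos(\lambda_m t+\alpha)\,dt + O(h_m^3).
\end{equation}
After replacing $\lambda_m$ by $\mu_m$ in the integrals (the error is $O(h_m/m)$ and hence negligible), the right-hand side consists of Fourier-type coefficients of $a,b\in L^2(0,1)$ along the orthogonal systems $\{\sin(\mu_m t+\alpha)\}_m$ and $\{\cos(\mu_m t+\alpha)\}_m$; Bessel's inequality then yields $\{h_m\}\in l^2$ and establishes \eqref{eqn:lasymL2}.

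For the refined expansion \eqref{eqn:asym-eig} under $Q\in W^{k,2}$, the extra smoothness propagates to $K_\alpha(1,\cdot)\in W^{k,2}(0,1)$ through the regularity theory of the Goursat problem satisfied by $K$. Performing $k$ successive integrations by parts in the oscillatory integrals produces boundary contributions of the form $\sum_{j=1}^{k} p_j(\lambda)/\lambda^j$ with $p_j$ trigonometric in $\lambda$ and linear in the derivatives $a^{(j-1)}, b^{(j-1)}$ at $0$ and $1$, plus a remainder whose $L^2$ Fourier coefficients are $o(1/m^k)$. The constants $c_1,\dots,c_k$ are then extracted iteratively: plugging the $j$-th order approximation of $h_m$ back into the fixed-point relation $(-1)^m\sin h_m = -\Delta(\mu_m+h_m)+\sin(\mu_m+\alpha-\beta+h_m)$ and re-expanding determines $c_{j+1}$. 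The main technical obstacle is bookkeeping the error propagation through this bootstrap: at each stage one must show that the remainder is provably one power of $1/m$ smaller than the term just extracted, which closes because each integration by parts gains exactly one factor of $1/\lambda$ and the residual Fourier coefficients retain the $l^2$ (or $o(1/m^k)$) decay guaranteed by the smoothness of $K_\alpha(1,\cdot)$.
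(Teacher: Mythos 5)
The paper offers no proof of this theorem (it is imported verbatim from \cite{diracbook, levitan, Hryniv}), and your proposal is essentially the standard argument used in those references: build the characteristic function $\Delta(\lambda)$ from the transmutation representation \eqref{eqn:TOPKalpha}, localize its zeros near $m\pi+(\beta-\alpha)$ by Rouch\'e together with a zero-counting argument on large contours, obtain the $l^2$ remainder from Bessel's inequality for the shifted trigonometric system, and refine by iterated integration by parts when $Q\in W^{k,2}$. One small correction: the error from replacing $\lambda_m$ by $\mu_m$ inside the oscillatory integrals is $h_m\cdot o(1)$ (Riemann--Lebesgue applied to $t\,a(t)$ and $t\,b(t)$) rather than $O(h_m/m)$, which is still exactly what the bootstrap $c\vert h_m\vert\le \vert A_m\vert+\vert B_m\vert+o(1)\vert h_m\vert$ needs to close, so the conclusion stands.
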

	Due to the periodic nature of trigonometric functions, adding an integer multiple of \(\pi\) to alpha or beta does not change the boundary conditions. To fix an enumeration of the spectrum we consider \(-\pi/2 \leq \alpha, \beta < \pi/2,\) so that in the case of the null potential \(Q \equiv 0\) we get \(\lambda_0 = \beta - \alpha \in (-\pi,\pi)\). Another reason for the election of this interval for \(\alpha\) is to have consistency between the eigenvalue asymptotics appearing in \cite{Hryniv} and \cite{diracbook}. On the other hand, since the solution \(\varphi_\alpha(\lambda,x)\) satisfies the boundary condition \eqref{eqn:bc0}, it follows that \( \varphi_\alpha(\lambda_m,x)\) are the eigenfunctions of the problem \(L(p,q,\alpha,\beta)\). The norming constants are defined as the squares of the \(L^2\)- norms of the eigenfunctions, that is
	\begin{equation}\label{eqn:norming}
		\alpha_m:= \int_{0}^{1} \vert \varphi_\alpha(\lambda_m,x) \vert^2 dx.
	\end{equation}
	The norming constants possess the following asymptotic formulas.
	\begin{Thm}[\cite{diracbook, Hryniv}]\label{thm:asym-nc}
		Let \(Q \in L^2([0,1],\mathcal{M}_2)\) be a real matrix-valued potential. Then,
		\begin{equation}\label{eqn:asym-normingcr}
			\alpha_m = 1 + r_m, \quad \{r_m\}_{m \in \mathbb{Z}} \in l^2.
		\end{equation}
		For a potential \( Q \in W^{k,2}([0,1], \mathcal{M}_2) \), the following asymptotics are true
		\begin{equation}\label{eqn:asym-normingc}
			\alpha_m = 1 + \frac{a_1}{m}+ \frac{a_2}{m^2} + \ldots + \frac{a_k}{m^k} + o\left(\frac{1}{m^k}\right), \quad \vert m \vert \to \infty,
		\end{equation}
		where \(a_1,a_2, \ldots, a_k\) are some constants.
	\end{Thm}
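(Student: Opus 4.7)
The plan is to substitute the integral representation of Theorem \ref{thm:Kalpha} into the definition \eqref{eqn:norming}. Writing $\varphi_\alpha(\lambda_m,x)=\varphi_{\alpha,0}(\lambda_m,x)+\psi_m(x)$ with $\psi_m(x):=\int_0^x K_\alpha(x,t)\varphi_{\alpha,0}(\lambda_m,t)\,dt$, and using the trivial identity $|\varphi_{\alpha,0}(\lambda,x)|^2=\sin^2(\lambda x+\alpha)+\cos^2(\lambda x+\alpha)\equiv 1$, one obtains
\[
\alpha_m = 1 + 2\int_0^1 \varphi_{\alpha,0}(\lambda_m,x)^T\psi_m(x)\,dx + \int_0^1|\psi_m(x)|^2\,dx =: 1 + r_m.
\]
Expanding the trigonometric products via the product-to-sum formulas, every contribution to $r_m$ can be recast as an integral of the form $\int_{-1}^1 g(s)\,e^{i\lambda_m s}\,ds$ (or its real/imaginary part) for a scalar function $g$ assembled from the entries of $K_\alpha$ (for the linear term in $K_\alpha$) and from convolution-type expressions in those entries (for the quadratic term). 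Since $K_\alpha$ inherits the regularity of $Q$ via \eqref{eqn:Kalpha} and Theorem \ref{thm:TOP}, one has $g\in L^2([-1,1])$ when $Q\in L^2$, and $g\in W^{k,2}([-1,1])$ when $Q\in W^{k,2}$.

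To deal with the perturbed frequency $\lambda_m = m\pi+(\beta-\alpha)+h_m$, I would factor $e^{i\lambda_m s}=e^{i(m\pi+\beta-\alpha)s}\bigl(1+(e^{ih_m s}-1)\bigr)$ and use the uniform bound $|e^{ih_m s}-1|\le |h_m|$ for $s\in[-1,1]$ to conclude that replacing $\lambda_m$ by $m\pi+(\beta-\alpha)$ introduces a perturbation controlled in $\ell^2$ norm by $\|g\|_{L^2}\,\|\{h_m\}\|_{\ell^2}$. The unperturbed quantities $\int g(s)\,e^{i(m\pi+\beta-\alpha)s}\,ds$ are Fourier coefficients of an $L^2$ function with respect to a standard trigonometric system and therefore form an $\ell^2$ sequence by Bessel's inequality. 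Combining these two observations proves \eqref{eqn:asym-normingcr}.

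For the smoother case $Q\in W^{k,2}$, I would plug the expansion \eqref{eqn:asym-eig} of $\lambda_m$ into $e^{i\lambda_m s}$ and Taylor-expand each factor $e^{ic_j s/m^j}$ in powers of $1/m$; the successive contributions pick up polynomial factors in $s$ against which the Fourier coefficients of the $W^{k,2}$ function $g$ are $O(m^{-k})$ by $k$-fold integration by parts, the boundary terms being scalars independent of $m$ that can be grouped with the constants $a_j$. Collecting contributions of equal order in $1/m$ then yields \eqref{eqn:asym-normingc}. The main obstacle is the careful bookkeeping in this last step: one must keep track of the interplay between the Taylor remainder from the eigenvalue expansion, the polynomial-weighted Fourier coefficients, and the quadratic term $\int|\psi_m|^2$, and verify that the terms of order lower than $m^{-k}$ recombine into the claimed $a_j/m^j$ (in particular that the contribution from the quadratic term produces no spurious lower-order terms) and that the final remainder is genuinely $o(m^{-k})$, exploiting the $o(m^{-k})$ error already available in \eqref{eqn:asym-eig}.
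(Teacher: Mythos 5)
The paper does not actually prove this theorem: it is imported verbatim from the literature (the statement carries the citations \cite{diracbook,Hryniv}), so there is no in-paper argument to compare yours against. Judged on its own merits, your outline follows the standard transmutation-operator route, and the first half --- $\alpha_m=1+r_m$ with $\{r_m\}\in l^2$ --- is essentially sound: the identity $|\varphi_{\alpha,0}(\lambda,x)|^2\equiv 1$, the reduction of the cross term to integrals $\int g(s)e^{i\lambda_m s}\,ds$ with $g\in L^2$ built from the entries of $K_\alpha$, the perturbation bound $|e^{ih_m s}-1|\le|h_m|$, and Bessel's inequality do combine to give an $l^2$ remainder. For the quadratic term $\int_0^1|\psi_m|^2$ the cleanest argument is Bessel's inequality in $t$ for each fixed $x$ followed by integration in $x$ (which even yields an $l^1$ bound), rather than the convolution reformulation you sketch, but either can be made to work.

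The $W^{k,2}$ half has genuine gaps. First, integrating by parts $k$ times requires $K_\alpha$ to have $W^{k,2}$ regularity in the relevant variables; this follows from nothing stated in the paper (Theorems \ref{thm:TOP} and \ref{thm:Kalpha} only give $K(x,\cdot)\in L^2$), and the transfer of smoothness from $Q$ to the transmutation kernel is itself a nontrivial theorem that must be invoked explicitly. Second, the boundary terms produced by integration by parts have the form $g^{(j)}(\pm 1)\,e^{\pm i\lambda_m}/(i\lambda_m)^{j+1}$; since $e^{\pm i\lambda_m}=(-1)^m e^{\pm i(\beta-\alpha)}(1+O(|h_m|))$, these are \emph{not} ``scalars independent of $m$'': they carry an alternating factor $(-1)^m$ and denominators $(m\pi+\beta-\alpha)^{j+1}$ that must themselves be re-expanded in powers of $1/m$. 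Showing that the alternating contributions cancel or vanish, so that the expansion really has constant coefficients $a_j$, is precisely the substantive part of the proof, and your sketch assumes it away. Third, the quadratic term is only controlled to $o(1)$; for \eqref{eqn:asym-normingc} it must be shown to admit its own expansion in powers of $1/m$ with constant coefficients up to order $k$. These are the places where the ``bookkeeping'' you flag is not merely tedious but where the argument could fail without additional structural input.
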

	\begin{Rmk}\label{rmk:beta-rec}
		Let us fix one of the boundary conditions, for instance, let us fix the value of \(\alpha\). Given a sequence of eigenvalues \( \lambda_{m} = \lambda_{m}(p,q,\alpha, \beta)\), as we can see from the asymptotics of the spectrum \eqref{eqn:lasymL2},
		\begin{equation}
			\lim_{\vert m \vert \to \infty } \left( \lambda_{m} - m \pi  \right)	 =  (\beta - \alpha).
		\end{equation}
		Thus, the second boundary condition is uniquely determined by the asymptotic formula for the spectrum and the first boundary condition. Hence, if \(\lambda_{m}(p_1,q_1,\alpha,\beta_1) = \lambda_{m}(p_2,q_2,\alpha,\beta_2) \) for all \( m \in \mathbb{Z}\) then \( \beta_1 = \beta_2.\) Similarly, if we fix the value of \(\beta\), then \( \alpha\) is uniquely determined by the asymptotics of the spectrum.
	\end{Rmk}
	Let us formulate the inverse Dirac problems considered in this work.
	\begin{Prob}[Recovery of a Dirac problem from its spectral data] \label{prob:bothbc}
		Given two sequences of real numbers \( \{\lambda_{m}\}_{m=-\infty}^\infty\) and \( \{\alpha_{m}\}_{m=-\infty}^\infty\) that satisfy the asymptotics \eqref{eqn:lasymL2} and \eqref{eqn:asym-normingcr} respectively, and given the parameters \(\alpha\) and \(\beta\) appearing in the boundary conditions \eqref{eqn:bc0} and \eqref{eqn:bc1}, find a real matrix-valued function \(Q\) such that \( \{\lambda_{m}\}_{m=-\infty}^\infty\) and \( \{\alpha_{m}\}_{m=-\infty}^\infty\) are the sequences of eigenvalues and norming constants of the problem \(L(p,q,\alpha,\beta).\)
	\end{Prob}
	Due to remark \ref{rmk:beta-rec}, we will also consider the following inverse spectral problem.
	\begin{Prob}[Recovery of a Dirac problem from its spectral data and one boundary condition]\label{prob:onebc}
		Given two sequences of real numbers \( \{\lambda_{m}\}_{m=-\infty}^\infty\) and \( \{\alpha_{m}\}_{m=-\infty}^\infty\) that satisfy the asymptotics \eqref{eqn:lasymL2} and \eqref{eqn:asym-normingcr} respectively, and given the parameter \(\alpha\) appearing in the boundary conditions \eqref{eqn:bc0}, find a real matrix-valued function \(Q\) and a real number \(\beta\) such that \( \{\lambda_{m}\}_{m=-\infty}^\infty\) and \( \{\alpha_{m}\}_{m=-\infty}^\infty\) are the sequences of eigenvalues and norming constants of the problem \(L(p,q,\alpha,\beta).\)
	\end{Prob}
	For  results regarding one-dimensional Dirac operators, the reader is invited to consult, for instance, \cite{levitan,diracbook, Hryniv} and the references therein. In particular, the reader is encouraged to review chapters 6 and 7 of the book \cite{diracbook}, regarding inverse problems, uniqueness theorems and isospectral Dirac operators.
	
	\section{Fourier-Legendre series for the integral kernels}\label{sec:nsbf}
	Denote by \(P_n\) the \(n\)-th Legendre polynomial. For a potential \(Q \in L^2([0,1], \mathcal{M}_2)\), since \( K(x,\cdot) \in L^2((-x,x),\mathcal{M}_2)\) it admits a Fourier-Legendre series expansion \cite{RoqTorNSBF}:
	\begin{equation}\label{eqn:K}
		K(x,t)= \sum_{n=0}^\infty \frac{1}{x} K_n(x)P_n\left(\frac{t}{x}\right), \quad K_n(x):=\begin{pmatrix}
			K^{(n)}_{11}(x) & K^{(n)}_{12}(x) \\ K^{(n)}_{21}(x) & K^{(n)}_{22}(x)
		\end{pmatrix},
	\end{equation}
	and the series converges with respect to the norm of the space \(L^2((-x,x),\mathcal{M}_2)\) for every \(x>0\). Using \eqref{eqn:Kalpha} and the fact that \(P_n(-t)=(-1)^{n}P_n(t)\)  we can obtain a Fourier-Legendre series for \(K_\alpha\) as follows
	\begin{equation}\label{eqn:KalphaFLS1}
		\begin{split}
			K_\alpha(x,t)&=\sum_{n=0} \frac{1}{x}K_n(x)\left[I+(-1)^{n+1}C_\alpha \right]P_n\left(\frac{t}{x}\right)\\
			&=\sum_{n=0} \frac{1}{x}K_{2n}(x)\left[I-C_\alpha \right]P_{2n}\left(\frac{t}{x}\right)+\sum_{n=0} \frac{1}{x}K_{2n+1}(x)\left[I+C_\alpha \right]P_{2n+1}\left(\frac{t}{x}\right),
		\end{split}
	\end{equation}
	where \(C_\alpha = \tilde{\sigma}_2 \cos 2\alpha + \tilde{\sigma}_3 \sin 2\alpha.\)
	In particular,
	\begin{align*}
		K_C(x,t)&=\sum_{n=0}^{\infty} \frac{1}{x} \left[ K_{2n}(x)[I+\tilde{\sigma}_2]P_{2n}\left( \frac{t}{x} \right) + K_{2n+1}(x)[I-\tilde{\sigma}_2]P_{2n+1}\left( \frac{t}{x} \right) \right] \\
		&=2 \sum_{n=0}^{\infty} \frac{1}{x} \begin{pmatrix} K_{11}^{(2n)}(x)P_{2n}\left( \frac{t}{x} \right) & K_{12}^{(2n+1)}(x)P_{2n+1}\left( \frac{t}{x} \right) \\ K_{21}^{(2n)}(x)P_{2n}\left( \frac{t}{x} \right) & K_{22}^{(2n+1)}(x)P_{2n+1}\left( \frac{t}{x} \right) \end{pmatrix} \\
		& = 2 \sum_{n=0}^{\infty} \frac{1}{x} K^C_n(x)A_n^C(x,t),
	\end{align*}
	where
	\begin{equation}\label{eqn:Kcn}
		K^C_n(x):=\begin{pmatrix} K_{11}^{(2n)}(x) & K_{12}^{(2n+1)}(x) \\ K_{21}^{(2n)}(x) & K_{22}^{(2n+1)}(x) \end{pmatrix}, \quad 	 A_n^C(x,t):=\begin{pmatrix} P_{2n}\left( \frac{t}{x} \right) & 0 \\ 0 & P_{2n+1}\left(\frac{t}{x} \right) \end{pmatrix}.
	\end{equation}
	Similarly, we can show that the kernel \(K_S(x,\cdot)\) admits a series representation given by
	\[
	K_S(x,t) = 2 \sum_{n=0}^{\infty} \frac{1}{x} K_n^S(x)A_n^S(x,t),
	\]
	where
	\begin{equation}\label{eqn:Ksn}
		K^S_n(x):=\begin{pmatrix} K_{11}^{(2n+1)}(x) & K_{12}^{(2n)}(x) \\ K_{21}^{(2n+1)}(x) & K_{22}^{(2n)}(x) \end{pmatrix}, \quad 	 A_n^S(x,t):=\begin{pmatrix} P_{2n+1}\left( \frac{t}{x} \right) & 0 \\ 0 & P_{2n}\left(\frac{t}{x} \right) \end{pmatrix}.
	\end{equation}
	We summarize these results in the following proposition.
	\begin{Prop}\label{prop:KCKS}
		The integral kernels \(K_C(x,\cdot)\) and \(K_S(x,\cdot)\) admit matrix Fourier-Legendre series representations
		\begin{equation}\label{eqn:KCKS}
			K_C(x,t) = 2 \sum_{n=0}^{\infty} \frac{1}{x} K^C_n(x)A_n^C(x,t), \quad K_S(x,t) = 2 \sum_{n=0}^{\infty} \frac{1}{x} K_n^S(x)A_n^S(x,t),
		\end{equation}
		where \(K_n^C, A_n^C\) and \( K_n^S, A_n^S\) are given by eqs. \eqref{eqn:Kcn} and \eqref{eqn:Ksn} respectively. Moreover, the series converges in the norm of \(L^2([0,x],\mathcal{M}_2)\), which follows from the convergence of \eqref{eqn:K}.
	\end{Prop}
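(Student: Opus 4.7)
My plan is to derive both expansions directly from the Fourier--Legendre series~(\ref{eqn:K}) for $K(x,\cdot)$ combined with the algebraic identity~(\ref{eqn:Kalpha}) and the parity $P_n(-s)=(-1)^n P_n(s)$. The computation is essentially displayed in the text preceding the proposition, so my task is to organize it as a proof and verify convergence in $L^2([0,x],\mathcal{M}_2)$.

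First, I would specialize Theorem~\ref{thm:Kalpha} to the two values of $\alpha$ of interest: $\alpha=-\pi/2$ gives $C_\alpha=-\tilde{\sigma}_2$ and hence $K_C(x,t)=K(x,t)+K(x,-t)\tilde{\sigma}_2$, while $\alpha=0$ gives $C_\alpha=\tilde{\sigma}_2$ and $K_S(x,t)=K(x,t)-K(x,-t)\tilde{\sigma}_2$. Substituting~(\ref{eqn:K}) and applying the parity of Legendre polynomials yields
\[
K_C(x,t)=\sum_{n=0}^{\infty}\frac{1}{x}K_n(x)\bigl[I+(-1)^n\tilde{\sigma}_2\bigr]P_n\!\left(\tfrac{t}{x}\right),
\]
together with the analogous formula for $K_S$ in which $\tilde{\sigma}_2$ is replaced by $-\tilde{\sigma}_2$.

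Second, I would split each sum into even and odd indices and note that $I+\tilde{\sigma}_2=\mathrm{diag}(2,0)$ and $I-\tilde{\sigma}_2=\mathrm{diag}(0,2)$. Right multiplication of $K_n(x)$ by $\mathrm{diag}(2,0)$ doubles the first column and zeros the second, while $\mathrm{diag}(0,2)$ does the opposite. Pairing the $n$-th even term of $K_C$ with the $n$-th odd term, the two surviving columns reassemble into the block $2K_n^C(x)A_n^C(x,t)$ with $K_n^C$ and $A_n^C$ as in~(\ref{eqn:Kcn}); the identical pairing, with the roles of even and odd indices swapped, yields the $K_S$ formula via~(\ref{eqn:Ksn}).

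For the convergence claim, the series~(\ref{eqn:K}) converges in $L^2((-x,x),\mathcal{M}_2)$ for each fixed $x>0$ by the result cited from~\cite{RoqTorNSBF}. The reflection $f(t)\mapsto f(-t)$ is an isometry of this space and right multiplication by $\tilde{\sigma}_2$ is bounded, so the series for $K(x,-\cdot)\tilde{\sigma}_2$, and hence for $K_C(x,\cdot)$ and $K_S(x,\cdot)$, also converge in $L^2((-x,x),\mathcal{M}_2)$. The grouping that defines $K_n^C A_n^C$ collects consecutive even/odd terms of this convergent series, so the regrouped partial sums form a subsequence of the original convergent partial sums; restriction to $[0,x]\subset[-x,x]$ gives convergence in $L^2([0,x],\mathcal{M}_2)$. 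I do not foresee any genuine obstacle; the only point requiring minor care is the grouping step, since neither the even nor the odd subseries is guaranteed to converge on its own, but pairing them sidesteps this issue.
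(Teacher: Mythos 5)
Your proposal is correct and follows essentially the same route as the paper, which obtains Proposition \ref{prop:KCKS} by exactly this computation (substituting \eqref{eqn:K} into the expressions $K(x,t)\pm K(x,-t)\tilde{\sigma}_2$, using $P_n(-s)=(-1)^nP_n(s)$, and pairing even and odd terms) displayed in the text immediately preceding the statement. Your treatment of the convergence — observing that the regrouped partial sums are a subsequence of the partial sums of the convergent series for $K(x,t)+K(x,-t)\tilde{\sigma}_2$, so that the pairing sidesteps the possible non-convergence of the even and odd subseries separately — is a welcome bit of extra care that the paper leaves implicit.
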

	One may ask if it is possible to write the Fourier-Legendre series for \(K_\alpha(x,t)\) in a compact way such as those for \(K_C(x,t)\) and \(K_S(x,t)\). We give a positive answer for this question, but first we need an auxiliary proposition.
	\begin{Prop}\label{prop:RaCa}
		Let \(R_\alpha\) be the rotation matrix by an angle \( \alpha \)
		\begin{equation}\label{eqn:Ralpha}
			R_\alpha = \begin{pmatrix}
				\cos \alpha & - \sin \alpha \\ \sin \alpha & \cos \alpha
			\end{pmatrix}.
		\end{equation}
		Consider the matrix \(C_\alpha = \tilde{\sigma}_2 \cos 2\alpha + \tilde{\sigma}_3 \sin 2\alpha.\) Then,
		\begin{enumerate}
			\item \(C_\alpha \tilde{\sigma}_2 = R_{2\alpha}.\)
			\item \( \tilde{\sigma}_2 R_\alpha \tilde{\sigma}_2 = R_{-\alpha}=R_\alpha^T=R_\alpha^{-1}.\)
			\item \( (I+C_\alpha)(I-C_\alpha) = 0. \)
			\item \((I\pm C_\alpha)R_\alpha=R_\alpha(I \pm \tilde{\sigma}_2).\)
		\end{enumerate}
	\end{Prop}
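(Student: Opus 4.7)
The statements are essentially algebraic identities among $2\times 2$ matrices, so the plan is to verify each one by direct computation, using the explicit form of $\tilde{\sigma}_2$ and $\tilde{\sigma}_3$ together with the standard Pauli-matrix relations $\tilde{\sigma}_2^2=\tilde{\sigma}_3^2=I$ and $\tilde{\sigma}_2\tilde{\sigma}_3+\tilde{\sigma}_3\tilde{\sigma}_2=0$. Nothing here is deep; the utility of collecting the identities is that items (3) and (4) are what will make the Fourier--Legendre expansion \eqref{eqn:KalphaFLS1} package into a compact product form analogous to those for $K_C$ and $K_S$, and item (2) is the similarity that controls how the rotation $R_\alpha$ interacts with $\tilde{\sigma}_2$. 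I expect no real obstacle in any step, but I would lay out the four arguments in the order that lets each one build on the previous.

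First I would prove (1). Since $\tilde{\sigma}_2^2=I$ and a direct multiplication gives $\tilde{\sigma}_3\tilde{\sigma}_2=\begin{pmatrix}0&-1\\1&0\end{pmatrix}$, I immediately get
\[
C_\alpha \tilde{\sigma}_2 = \cos 2\alpha\,I + \sin 2\alpha\begin{pmatrix}0&-1\\1&0\end{pmatrix} = \begin{pmatrix}\cos 2\alpha & -\sin 2\alpha\\\sin 2\alpha & \cos 2\alpha\end{pmatrix}=R_{2\alpha}.
\]
Next for (2), conjugation by the diagonal matrix $\tilde{\sigma}_2=\mathrm{diag}(1,-1)$ flips the signs of the off-diagonal entries of $R_\alpha$, which produces $R_{-\alpha}$; the equalities $R_{-\alpha}=R_\alpha^T=R_\alpha^{-1}$ are the standard orthogonality of rotation matrices.

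For (3), I would expand the product as $I-C_\alpha^2$ and show $C_\alpha^2=I$. Squaring $C_\alpha = \tilde{\sigma}_2\cos 2\alpha+\tilde{\sigma}_3\sin 2\alpha$, the diagonal terms give $\cos^2 2\alpha+\sin^2 2\alpha=1$ times $I$, and the cross term carries the factor $\tilde{\sigma}_2\tilde{\sigma}_3+\tilde{\sigma}_3\tilde{\sigma}_2$, which vanishes by Pauli anticommutation. Finally, for (4) I would rewrite the claim as $C_\alpha R_\alpha = R_\alpha \tilde{\sigma}_2$. Using (1), $C_\alpha R_\alpha = R_{2\alpha}\tilde{\sigma}_2 R_\alpha$, and rearranging (2) gives the intertwining $\tilde{\sigma}_2 R_\alpha = R_{-\alpha}\tilde{\sigma}_2$, whence $R_{2\alpha}\tilde{\sigma}_2 R_\alpha = R_{2\alpha}R_{-\alpha}\tilde{\sigma}_2 = R_\alpha \tilde{\sigma}_2$. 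Adding $R_\alpha$ to both sides yields the $+$ case and subtracting gives the $-$ case, completing the proof.
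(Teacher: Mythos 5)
Your verification is correct: all four identities check out by the direct computations you give, and the logical ordering (deriving (4) from (1) and (2), and (3) from $C_\alpha^2=I$ via Pauli anticommutation) is sound. The paper states this proposition without proof, treating it as a routine matrix computation, so your write-up simply supplies the elementary argument the authors left implicit.
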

	Right-multiplication of eq. \eqref{eqn:KalphaFLS1} by \(R_\alpha\) and proposition \ref{prop:RaCa}  gives us
	\begin{equation}
		K_\alpha(x,t)R_\alpha=\sum_{n=0}^\infty \frac{1}{x}K_{2n}(x)R_\alpha\left[I-\tilde{\sigma}_2\right]P_{2n}\left(\frac{t}{x}\right)+\sum_{n=0}^\infty \frac{1}{x}K_{2n+1}(x)R_\alpha\left[I+\tilde{\sigma}_2\right]P_{2n+1}\left(\frac{t}{x}\right),
	\end{equation}
	which simplifies to
	\begin{equation}
		K_\alpha(x,t)R_\alpha=2 \sum_{n=0}^\infty \frac{1}{x}K_n^\alpha(x) A_n^S(x,t),
	\end{equation}
	where
	\begin{equation}
		K_{n}^\alpha(x)=\begin{pmatrix}
			\cos \alpha \, K_{11}^{(2n+1)}(x)+\sin \alpha \, K_{12}^{(2n+1)}(x) & -\sin \alpha \, K_{11}^{(2n)}(x)+\cos \alpha \, K_{12}^{(2n)}(x) \\
			\cos \alpha \, K_{21}^{(2n+1)}(x)+\sin \alpha \, K_{22}^{(2n+1)}(x) & -\sin \alpha \, K_{21}^{(2n)}(x)+\cos \alpha \, K_{22}^{(2n)}(x)
		\end{pmatrix}.
	\end{equation}
	Hence,
	\begin{equation}\label{eqn:KalphaASR}
		K_\alpha(x,t)=2 \sum_{n=0}^\infty \frac{1}{x}K_n^\alpha(x) A_n^S(x,t)R_\alpha^T.
	\end{equation}
	\subsection{Neumann series of Bessel functions representation for solutions of the Dirac equation}
	
	In \cite{RoqTorNSBF} a Neumann series of Bessel functions (NSBF) representation was obtained for the matrix solution of the Cauchy problem \eqref{eqn:U}. With a modification of the procedure from \cite{RoqTorNSBF} we can obtain NSBF series representations for \(C(\lambda,x)\) and \(S(\lambda,x)\) where the coefficients \(K_n^C\) and \(K_n^S\) appear explicitly. Since the procedure is pretty similar, we do not present here all the details. Substitution of the series \eqref{eqn:KCKS} into eqs. \eqref{eqn:Kc} and \eqref{eqn:Ks} leads to
	\begin{equation}
		\begin{split}
			C(\lambda,x)&= C_0(\lambda,x)+2\sum_{n=0}^\infty \frac{1}{x} K_n^C(x) \int_{0}^x A_n^C(x,t)C_0(\lambda,t)\;dt, \\
			S(\lambda,x)&= S_0(\lambda,x)+2\sum_{n=0}^\infty \frac{1}{x} K_n^S(x) \int_{0}^x A_n^S(x,t)S_0(\lambda,t)\; dt.
		\end{split}
	\end{equation}
	The integrals above can be exactly computed in terms of spherical Bessel functions thanks to the following formulas \cite[f. 2.17.7, p. 386]{prudnikov2}
	\begin{equation} \label{eqn:intPncs}
		\frac{1}{x} \int_{0}^{x} P_{2n}\left(\frac{t}{x}\right) \cos \lambda t \; dt = (-1)^n j_{2n}(\lambda x), \quad \frac{1}{x} \int_{0}^{x} P_{2n+1}\left(\frac{t}{x}\right) \sin \lambda t \; dt = (-1)^n j_{2n+1}(\lambda x),
	\end{equation}
	where \(j_\nu(z)\) denotes the spherical Bessel function of order \(\nu\). To obtain an NSBF representation for the solution \(\varphi_\alpha(\lambda,x)\), observe that
	\begin{equation}
		\varphi_{\alpha,0}(\lambda,x)=R_\alpha {\varphi}_{0}(\lambda,x), \quad {\varphi}_{0}(\lambda,x):= \begin{pmatrix}
			\sin (\lambda x) \\
			-\cos ( \lambda x)
		\end{pmatrix}.
	\end{equation}
	Thus, substitution of eq. \eqref{eqn:KalphaASR} into \eqref{eqn:TOPKalpha} leads to
	\begin{equation}
		\varphi_\alpha(\lambda,x)=\varphi_{\alpha,0}(\lambda,x)+2\sum_{n=0}^\infty\frac{1}{x}K_n^\alpha(x)\int_{0}^x A_n^S(x,t) \varphi_{0}(\lambda,t)\; dt.
	\end{equation}
	Hence, we get the following NSBF representation for the solutions \(C(\lambda,x),\) \(S(\lambda,x)\) and \(\varphi_\alpha(\lambda,x)\).
	\begin{Thm} The solutions \(C(\lambda,x)\), \(S(\lambda,x)\) and \(\varphi_\alpha(\lambda,x)\) admit the following NSBF representations
		\begin{equation}\label{eqn:nsbfCS}
			\begin{split}
				C(\lambda,x)&= C_0(\lambda,x)+2\sum_{n=0}^\infty (-1)^n K_n^C(x) \begin{pmatrix}
					j_{2n}(\lambda x) \\
					j_{2n+1} (\lambda x)
				\end{pmatrix}, \\
				S(\lambda,x)&= S_0(\lambda,x)+2\sum_{n=0}^\infty (-1)^n K_n^S(x) \begin{pmatrix}
					-j_{2n+1}(\lambda x) \\
					j_{2n} (\lambda x)
				\end{pmatrix}, \\
				\varphi_\alpha(\lambda,x)&= \varphi_{\alpha,0}(\lambda,x)+2\sum_{n=0}^\infty (-1)^n K_n^\alpha(x) \begin{pmatrix}
					j_{2n+1}(\lambda x) \\
					-j_{2n} (\lambda x)
				\end{pmatrix}.
			\end{split}
		\end{equation}
		For every \( \lambda \in \mathbb{C}\) the series converge pointwise. For every \( x \in [0,1]\) the series converges uniformly on any compact set of the complex plane of the variable \(\lambda\) and the remainders of their partial sums admit estimates independent of \( \operatorname{Re } \lambda.\)
	\end{Thm}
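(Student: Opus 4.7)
The plan is to derive each of the three NSBF formulas by substituting the matrix Fourier-Legendre expansion of the corresponding kernel into the transmutation integral representation, evaluating the resulting integrals via \eqref{eqn:intPncs}, and then justifying the interchange of summation and integration together with the required uniform tail estimate.

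For $C(\lambda,x)$ I would substitute \eqref{eqn:KCKS} into \eqref{eqn:Kc}. Since $A_n^C=\mathrm{diag}(P_{2n}(t/x),P_{2n+1}(t/x))$ and $C_0=(\cos\lambda t,\sin\lambda t)^T$, the $n$-th term contributes
\[
\frac{2}{x}K_n^C(x)\int_0^x A_n^C(x,t)\,C_0(\lambda,t)\,dt=2(-1)^n K_n^C(x)\begin{pmatrix}j_{2n}(\lambda x)\\ j_{2n+1}(\lambda x)\end{pmatrix}
\]
by direct application of \eqref{eqn:intPncs}. The formula for $S(\lambda,x)$ is obtained analogously from \eqref{eqn:Ks} with $S_0=(-\sin\lambda t,\cos\lambda t)^T$. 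For $\varphi_\alpha$, I would use \eqref{eqn:KalphaASR} together with the identity $R_\alpha^T\varphi_{\alpha,0}(\lambda,t)=\varphi_0(\lambda,t)$ (immediate from $\varphi_{\alpha,0}=R_\alpha\varphi_0$ and $R_\alpha^T R_\alpha=I$), which reduces the integrals to the same spherical Bessel formulas applied to the unrotated $\varphi_0$.

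Pointwise convergence for fixed $\lambda\in\mathbb{C}$ follows from the $L^2([0,x],\mathcal{M}_2)$-convergence of the kernel expansions asserted in Proposition \ref{prop:KCKS}: since $C_0(\lambda,\cdot)$ belongs to $L^2([0,x],\mathbb{C}^2)$, the functional $K\mapsto\int_0^x K(x,t)C_0(\lambda,t)\,dt$ is continuous on $L^2([0,x],\mathcal{M}_2)$ by Cauchy-Schwarz, so the integration commutes with the convergent partial sums. The same reasoning handles $S(\lambda,x)$ and $\varphi_\alpha(\lambda,x)$.

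The delicate point is the uniform convergence on compacts in $\lambda$ together with the estimate independent of $\mathrm{Re}\,\lambda$. The key ingredient is a bound of the form $|j_n(z)|\leq c_n\, e^{|\mathrm{Im}\,z|}$ with $c_n$ summable in $n^2$ after squaring, valid on compacts in $z$; combined with Cauchy-Schwarz this yields
\[
\Big|\sum_{n\geq N}(-1)^n K_n^C(x)\begin{pmatrix}j_{2n}(\lambda x)\\ j_{2n+1}(\lambda x)\end{pmatrix}\Big|\leq C(x,R)\, e^{|\mathrm{Im}\,\lambda|x}\Big(\sum_{n\geq N}|K_n^C(x)|^2\Big)^{1/2}
\]
for $|\lambda|\leq R$, where the tail $\sum_{n\geq N}|K_n^C(x)|^2\to 0$ by Parseval for the Fourier-Legendre expansion of $K_C(x,\cdot)$. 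The exponential prefactor is manifestly independent of $\mathrm{Re}\,\lambda$, giving the claim. The main obstacle is pinning down the Bessel bound with $e^{|\mathrm{Im}\,\lambda|x}$ cleanly separated from a sufficiently fast decay in $n$; this estimate is precisely the one obtained in the NSBF analysis of \cite{RoqTorNSBF} for the matrix kernel $K(x,t)$, and the argument transfers to $K_C$, $K_S$, and $K_\alpha$ with only cosmetic modifications arising from the block structure of $A_n^C$, $A_n^S$ and the extra rotation $R_\alpha^T$.
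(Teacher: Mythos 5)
Your proposal is correct and follows essentially the same route as the paper: substitute the matrix Fourier--Legendre expansions of $K_C$, $K_S$, $K_\alpha$ into the corresponding transmutation representations, evaluate the integrals with \eqref{eqn:intPncs} (using $R_\alpha^TR_\alpha=I$ to reduce the $\varphi_\alpha$ case to $\varphi_0$), and justify convergence via Cauchy--Schwarz against the $L^2$ tails of the kernel expansions, which is exactly the argument the paper delegates to \cite{RoqTorNSBF} and \cite[Section 4]{nsbf}. Your remainder bound via the spherical Bessel estimates is equivalent to the standard NSBF estimate $\lvert\text{remainder}\rvert\leq\Vert K_C(x,\cdot)-K_C^N(x,\cdot)\Vert_{L^2}\,\Vert C_0(\lambda,\cdot)\Vert_{L^2}$ with $\Vert C_0(\lambda,\cdot)\Vert_{L^2([0,x])}\leq\sqrt{2x}\,e^{\vert\operatorname{Im}\lambda\vert x}$, which makes the independence of $\operatorname{Re}\lambda$ immediate.
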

	The reader can consult \cite[Section 4]{nsbf} for details about the convergence of Neumann series of Bessel functions.
	\section{The Gelfand-Levitan equation}\label{sec:GL}
	In this section we introduce the Gelfand-Levitan equation for the integral kernels \(K_C\), \(K_S\) and \(K_\alpha\).
	Let \(  \lambda_m \) and  \( \alpha_m \) be the eigenvalues and the norming constants of the boundary value problem  \(L(p,q,-\pi/2,0),\) \(p,q\in L^2([0,1],\mathbb{R})\). Additionally, let \(\lambda_{m}^0\) and \(\alpha_m^0\) denote the eigenvalues and norming constants of the problem \( L(0,0,-\pi/2,0)\). Then it is straightforward to verify that
	\begin{equation}
		\lambda_{m}^0=\left(m+\frac{1}{2}\right)\pi, \quad \alpha_m^0=1, \quad m \in \mathbb{Z}.
	\end{equation}
	\begin{Thm}[{\cite[Lemma 5.2, Thm. 5.5]{Hryniv}}]\label{thm:glm}
		The kernel \( K_C \) of the transmutation operator \eqref{eqn:Kc} transforming the solution \( C_0(\lambda, x) \) to the solution \( C(\lambda, x) \) satisfies the Gelfand-Levitan equation
		\begin{equation}\label{eqn:GL-KC}
			K_C(x,t)+F_C(x,t)+\int_{0}^{x}K_C(x,s)F_C(s,t)ds=0,
		\end{equation}
		where
		\begin{equation}
			F_C(x,t):=\frac{1}{2}\left[ H\left( \frac{x-t}{2} \right)+ H\left( \frac{x+t}{2} \right)\tilde{\sigma}_2 \right],
		\end{equation}
		\begin{equation}\label{eqn:H}
			H(s):=\mathrm{p.v.} \sum_{m=-\infty}^{\infty}H_m(s)= \mathrm{p.v.} \sum_{m=-\infty}^{\infty}\left( \frac{1}{ \alpha_m } e^{-2\lambda_m s B}-e^{-2\lambda_m^0 sB} \right), \quad H\in L^2([-1,1],\mathcal{M}_2),
		\end{equation}
		the series converges in the \(L^2([-1,1],\mathcal{M}_2)\) norm and \( F_C\in L^2([0,x]\times [0,x],\mathcal{M}_2)\). The Gelfand-Levitan equation is a Fredholm integral equation which has a unique solution. Moreover, for each \(N\), consider the \(N\)-th partial sum
		\[
		H^N(s):=\sum_{m=-N}^N H_m(s)
		\]
		of \(H\). Then, for \(H^N\), the Gelfand-Levitan equation possesses a unique solution \( K^N_C\) which uniquely determines a potential \(Q^N\) via \eqref{eqn:Kalphxx}, and \( Q^N \to Q\) in \(L^2(\mathcal{M}_2)\) when \(N \to \infty\).
	\end{Thm}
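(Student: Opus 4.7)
The plan is to follow a four-step Gelfand-Levitan strategy adapted to the Dirac setting: (i) derive the integral equation from Parseval-type identities, (ii) verify the $L^2$ regularity of $H$ and $F_C$, (iii) establish unique solvability via Fredholm theory, and (iv) pass to the limit of truncated spectral data.

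\emph{Step (i).} The starting point is the Parseval identity for the self-adjoint operator $L(p,q,-\pi/2,0)$: the system $\{\varphi_{-\pi/2}(\lambda_m,\cdot)/\sqrt{\alpha_m}\}_{m\in\mathbb{Z}}$ is an orthonormal basis of $L^2([0,1],\mathbb{C}^2)$, and since the initial conditions yield $\varphi_{-\pi/2}(\lambda,x)=-C(\lambda,x)$, the identity may be rephrased in terms of $C(\lambda_m,\cdot)/\sqrt{\alpha_m}$. Subtracting the analogous identity for the free problem $L(0,0,-\pi/2,0)$, substituting the transmutation representation \eqref{eqn:Kc}, and recognising the cross sums as convolutions against the function $H$ from \eqref{eqn:H}, equation \eqref{eqn:GL-KC} emerges once the exponentials $e^{-2\lambda_m sB}$ are combined into the cosine/sine pairs corresponding to the arguments $(x\pm t)/2$.

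\emph{Step (ii).} I would split
\[
H_m(s)=\frac{1}{\alpha_m}\bigl(e^{-2\lambda_m sB}-e^{-2\lambda_m^0 sB}\bigr)+\Bigl(\frac{1}{\alpha_m}-1\Bigr)e^{-2\lambda_m^0 sB}.
\]
The second family is orthogonal in $L^2([-1,1])$ up to a fixed constant and has coefficients $\alpha_m^{-1}-1\in\ell^2$ by Theorem \ref{thm:asym-nc}, so that portion of the series converges in $L^2$. The first family is controlled by the mean value theorem together with $|\lambda_m-\lambda_m^0|=|h_m|\in\ell^2$ (Theorem \ref{thm:asym-lambda}) and Riesz basis estimates for $\{e^{-2\lambda_m sB}\}$; the principal value is precisely what cancels the slowly decaying contributions near resonance. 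Once $H\in L^2([-1,1],\mathcal{M}_2)$, the representation of $F_C$ as a sum of shifted copies of $H$ yields $F_C\in L^2([0,x]^2,\mathcal{M}_2)$. This $L^2$ bound is the main analytic obstacle.

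\emph{Steps (iii) and (iv).} Rewrite \eqref{eqn:GL-KC} as $(I+\mathcal{F}_x)K_C(x,\cdot)=-F_C(x,\cdot)$, where $\mathcal{F}_x$ is the Hilbert-Schmidt operator on $L^2([0,x],\mathcal{M}_2)$ with kernel $F_C$. A short computation using the Parseval identity, after a conjugation accommodating the block structure of $F_C$, yields $I+\mathcal{F}_x>0$, so the Fredholm alternative gives a unique $L^2$ solution. For the truncation, $H^N\to H$ in $L^2$ implies $F_C^N\to F_C$ in $L^2$ uniformly in $x$, and the stability of the invertible operator $I+\mathcal{F}_x$ yields $K_C^N\to K_C$ in $L^2([0,x]^2,\mathcal{M}_2)$, uniformly in $x\in[0,1]$. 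The potential is then extracted by differentiating the Gelfand-Levitan equation in $x$ and evaluating at $t=x$, producing a Volterra equation whose solution $K_C(x,x)$ depends continuously on $F_C$ in the appropriate norm; combined with \eqref{eqn:Kalphxx}, this gives $Q^N\to Q$ in $L^2(\mathcal{M}_2)$.
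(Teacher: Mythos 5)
First, a point of reference: the paper does not prove this theorem at all --- it is imported verbatim from \cite{Hryniv} (Lemma 5.2 and Theorem 5.5 there), and the only proof-like content in the paper is the remark immediately following the statement, which shows that the series \eqref{eqn:H} in fact converges in $L^2$ without the principal value. Your outline follows the standard Gelfand--Levitan route (Parseval identity for the perturbed and free problems, substitution of \eqref{eqn:Kc}, positivity of $I+\mathcal{F}_x$, stability under truncation), which is indeed the route taken in \cite{Hryniv}, so the overall architecture is right. Your identification of $\varphi_{-\pi/2}=-C$ and of the norming constants is correct.

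There are, however, two concrete gaps. In Step (ii), bounding the first family by the mean value theorem gives $\left\vert e^{-2\lambda_m sB}-e^{-2\lambda_m^0 sB}\right\vert \le 2\vert s\vert\,\vert h_m\vert$ with $\{h_m\}\in\ell^2$ only; since $\ell^2\not\subset\ell^1$, this does not yield convergence of the sum, and your appeal to Riesz basis estimates for $\{e^{-2\lambda_m sB}\}$ cannot close the gap because the coefficients $1/\alpha_m$ of that family tend to $1$ and are not square-summable. What is actually needed is to expand
\begin{equation*}
e^{-2\lambda_m sB}-e^{-2\lambda_m^0 sB}=e^{-2\lambda_m^0 sB}\left(e^{-2h_m sB}-I\right)=-2sh_m\,B\,e^{-2\lambda_m^0 sB}+O(h_m^2),
\end{equation*}
so that the leading term is an orthogonal family with $\ell^2$ coefficients (hence $L^2$-convergent) and the remainder is absolutely summable; this is exactly the decomposition carried out in the remark following the theorem, and it also shows the principal value is unnecessary rather than ``precisely what cancels'' anything. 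In Step (iv), the claim $Q^N\to Q$ in $L^2(\mathcal{M}_2)$ is the genuinely deep part of the statement (Theorem 5.5 of \cite{Hryniv}): $L^2([0,x]^2)$ convergence of the kernels $K_C^N$ does not control their diagonal traces $K_C^N(x,x)$, and ``differentiating the Gelfand--Levitan equation in $x$ and evaluating at $t=x$'' presupposes regularity of $K_C^N$ in $x$ that you have not established. The published proof goes through the continuity of the map from accelerants to potentials (via a Krein-type factorization), and some substitute for that analysis is required before the final convergence claim can be asserted.
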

	
	\begin{Rmk}
		We point out that our definition of the norming constants \eqref{eqn:norming} follows that of \cite{diracbook, levitan}, which differs from the definition in \cite{Hryniv}. This makes a subtle difference in the expression for \(H\) in this work, in comparison with \cite{Hryniv}.
	\end{Rmk}
	\begin{Rmk}
		Although the fact that \(H \in L^2([-1,1],\mathcal{M}_2)\) was proven in the case \(\beta=0\) in \cite[Lemma 5.2]{Hryniv}, the proof remains valid for \(\beta \neq 0\) due to the asymptotics for the eigenvalues and norming constants, see theorems \ref{thm:asym-lambda}, \ref{thm:asym-nc}. Moreover,
		\begin{align*}
			\frac{1}{ \alpha_m } e^{-2\lambda_m s B}-e^{-2\lambda_m^0 sB}&=\left(\frac{1}{ \alpha_m }-1 \right) e^{-2\lambda_m s B}+e^{-2\lambda_m s B}-e^{-2\lambda_m^0 sB}\\
			&=\left(\frac{1}{ \alpha_m }-1 \right) e^{-2\lambda_m s B}+e^{-2\lambda_m^0 sB}\left(e^{-2(\lambda_m-\lambda_m^0) sB}-I\right).
		\end{align*}
		Then, using the asymptotic formulas \eqref{eqn:lasymL2} and \eqref{eqn:asym-normingcr} we can see that
		\[
			\left\vert \frac{1}{ \alpha_m } e^{-2\lambda_m s B}-e^{-2\lambda_m^0 sB} \right \vert \leq \vert r_m \vert + \vert h_m \vert + o(\vert h_m \vert ).
		\]
		Hence, the series \eqref{eqn:H} converges in \(L^2([-1,1], \mathcal{M}_2)\) without the need to use the principal value.
	\end{Rmk}
	Basic matrix operations show that we can write \(F_C(x,t)\) explicitly as
	\begin{equation}\label{eqn:GL-FC}
		\begin{split}
			F_C(x,t)&=\sum_{m=-\infty}^{\infty} \left[ \frac{1}{\alpha_m} \begin{pmatrix}
				\cos \lambda_m x \cos \lambda_m t & \cos \lambda_m x \sin \lambda_m t \\
				\sin \lambda_m x \cos \lambda_m t & \sin \lambda_m x \sin \lambda_m t
			\end{pmatrix} - \begin{pmatrix}
				\cos \lambda_m^0 x \cos \lambda_m^0 t & \cos \lambda_m^0 x \sin \lambda_m^0 t \\
				\sin \lambda_m^0 x \cos \lambda_m^0 t & \sin \lambda_m^0 x \sin \lambda_m^0 t
			\end{pmatrix} \right]\\
			&=\sum_{m=-\infty}^{\infty} \left[ \frac{1}{\alpha_m} M_C(\lambda_{m},x,t)-M_C(\lambda_m^0,x,t) \right],
		\end{split}
	\end{equation}
	where
	\begin{equation}\label{eqn:Mlambxt}
		M_C(\lambda,x,t):=\begin{pmatrix}
			\cos \lambda x \cos \lambda t & \cos \lambda x \sin \lambda t \\
			\sin \lambda x \cos \lambda t & \sin \lambda x \sin \lambda t
		\end{pmatrix}.
	\end{equation}
	\begin{Rmk}\label{rmk:nonconvergent}
		Even though \( F_C\in L^2([0,x]\times [0,x],\mathcal{M}_2)\), is not necessarily  pointwise convergent. Indeed, consider \(x=t=0.\) Then,
		\begin{equation}
			F_C(0,0)=\sum_{m=-\infty}^{\infty} \left[ \frac{1}{\alpha_m} \begin{pmatrix}
				1 & 0 \\
				0 & 0
			\end{pmatrix} - \begin{pmatrix}
				1 & 0 \\
				0 & 0
			\end{pmatrix} \right].
		\end{equation}
		Even in the case of a smooth potential, due to the asymptotic formula \eqref{eqn:asym-normingc}, we have that
		\begin{equation}\label{eqn:a1/m}
			\left( \frac{1}{\alpha_{m}}-1 \right) \sim -\frac{a_1}{m}, \quad \vert m \vert \to \infty.
		\end{equation}
		Hence, the series for \(F_C(0,0)\) diverges whenever \(a_1\neq 0\). However, as we can see from the asymptotic in \eqref{eqn:a1/m}, the convergence of \eqref{eqn:GL-FC} can be improved by considering the principal value of the series, that is, performing summation from \(m=-M\) to \(M\). Doing so, the main asymptotic term \(-a_1/m\) gets canceled for large values of \(\vert m \vert\).
	\end{Rmk}
	In \cite[Chapter 9]{diracbook}, the Gelfand-Levitan equation for the kernel \(K_S\) is derived.
	\begin{Thm}[\cite{diracbook}]\label{thm:GL-KS}
		Let \(  \lambda_m \) and  \( \alpha_m \) be the eigenvalues and norming constants of the boundary value problem  \(L(p,q,0,0), \, p,q\in L^2([0,1],\mathbb{R}).\) Additionally, let \(\lambda_{m}^0=m\pi\) and \(\alpha_m^0=1\) denote the eigenvalues and norming constants of the problem \( L(0,0,0,0)\). For each fixed \(x\in \left( 0, 1\right]\), the kernel \(K_S(x,t)\) of the transmutation operator \eqref{eqn:Ks} satisfies the Gelfand-Levitan equation
		\begin{equation}\label{eqn:GL-S}
			K_S(x,t)+F_S(x,t)+\int_{0}^{x}K_S(x,s)F_S(s,t)ds=0, \quad 0\leq t \leq x \leq 1,
		\end{equation}
		where
		\begin{equation}\label{eqn:GL-FS}
			\begin{split}
				F_S(x,t)=\sum_{m=-\infty}^{\infty} 	&\left[ \frac{1}{\alpha_m} \begin{pmatrix}
					\sin \lambda_m x \sin \lambda_m t & -\sin \lambda_m x \cos \lambda_m t \\
					-\cos \lambda_m x \sin \lambda_m t & \cos \lambda_m x \cos \lambda_m t
				\end{pmatrix} \right.\\
				&\left. - \begin{pmatrix}
					\sin \lambda_m^0 x \sin \lambda_m^0 t & -\sin \lambda_m^0 x \cos \lambda_m^0 t \\
					-\cos \lambda_m^0 x \sin \lambda_m^0 t & \cos \lambda_m^0 x \cos \lambda_m^0 t
				\end{pmatrix} \right],
			\end{split}
		\end{equation}
		and the equation \eqref{eqn:GL-S} possesses a unique solution.
	\end{Thm}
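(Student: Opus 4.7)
The plan is to parallel the proof of Theorem \ref{thm:glm}, replacing the cosine-type solutions by sine-type ones. The structural mechanism---matching two completeness relations through the transmutation operator---does not depend on which boundary condition is fixed at $x = 0$.

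First, I would rewrite the transmutation representation in terms of $\varphi_0$. Comparing initial data, $\varphi_0(\lambda, 0) = (0,-1)^T = -S(\lambda, 0)$ and $\varphi_{0,0}(\lambda, x) = -S_0(\lambda, x)$, so from \eqref{eqn:Ks} one obtains
\begin{equation*}
\varphi_0(\lambda,x) = \varphi_{0,0}(\lambda, x) + \int_0^x K_S(x,t)\varphi_{0,0}(\lambda, t)\, dt,
\end{equation*}
exhibiting the eigenfunctions $\varphi_0(\lambda_m,\cdot)$ of $L(p,q,0,0)$ as the images of the free eigenfunctions $\varphi_{0,0}(\lambda_m,\cdot)$ under $I + \mathcal{K}_S$, where $\mathcal{K}_S$ denotes the Volterra operator with kernel $K_S$.

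Second, the Parseval identities in the distributional sense
\begin{equation*}
\sum_{m \in \mathbb{Z}} \frac{\varphi_0(\lambda_m,x)\varphi_0(\lambda_m,t)^T}{\alpha_m} = \delta(x-t)\,I = \sum_{m \in \mathbb{Z}}\varphi_{0,0}(\lambda_m^0, x)\varphi_{0,0}(\lambda_m^0, t)^T
\end{equation*}
hold for the perturbed and the free problem respectively. Substituting the transmutation representation into the left-hand sum, subtracting the right-hand one, and using the Volterra structure to isolate the diagonal contribution at $s=x$ as a $K_S(x,t)$ term, leads to equation \eqref{eqn:GL-S} with
\begin{equation*}
F_S(x,t) = \sum_{m \in \mathbb{Z}}\left[\frac{\varphi_{0,0}(\lambda_m,x)\varphi_{0,0}(\lambda_m,t)^T}{\alpha_m} - \varphi_{0,0}(\lambda_m^0,x)\varphi_{0,0}(\lambda_m^0,t)^T\right].
\end{equation*}
Expanding each outer product using $\varphi_{0,0}(\lambda, x) = (\sin \lambda x, -\cos\lambda x)^T$ produces precisely formula \eqref{eqn:GL-FS}.

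The main difficulty is convergence: the series for $F_S$ need not converge pointwise, as noted in Remark \ref{rmk:nonconvergent}. To justify the manipulation and to obtain $F_S \in L^2([0,x]\times[0,x],\mathcal{M}_2)$, I would split each summand as $(\alpha_m^{-1} - 1)\varphi_{0,0}(\lambda_m, x)\varphi_{0,0}(\lambda_m, t)^T$ plus $\varphi_{0,0}(\lambda_m, x)\varphi_{0,0}(\lambda_m, t)^T - \varphi_{0,0}(\lambda_m^0, x)\varphi_{0,0}(\lambda_m^0, t)^T$, and then invoke the asymptotics \eqref{eqn:lasymL2} and \eqref{eqn:asym-normingcr}---exactly as in the second remark following Theorem \ref{thm:glm}---to control the tails in $l^2$. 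Uniqueness of the solution of the Fredholm equation \eqref{eqn:GL-S} then follows from the Fredholm alternative together with completeness of the eigenfunctions: a solution $f(x,\cdot)$ of the homogeneous equation would, through the Parseval identity for the perturbed problem, satisfy $\int_0^x f(x,s)\varphi_0(\lambda_m, s)\,ds = 0$ for every $m$, forcing $f \equiv 0$.
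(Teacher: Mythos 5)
The paper does not actually prove this theorem---it is quoted from \cite[Chapter 9]{diracbook}---but your sketch follows the same standard route used there and in the proof of Theorem \ref{thm:glm}: rewrite \eqref{eqn:Ks} in terms of \(\varphi_0=-S\), substitute the transmutation representation into the perturbed and free completeness (Parseval) relations, subtract to read off \(F_S\) as the difference of outer products, and control convergence via the \(l^2\) asymptotics \eqref{eqn:lasymL2}, \eqref{eqn:asym-normingcr}. The only slip is in the uniqueness step: the Parseval argument yields \(\int_0^x f(x,s)\varphi_{0,0}(\lambda_m,s)\,ds=0\) (free solutions at the perturbed eigenvalues), not directly the orthogonality to \(\varphi_0(\lambda_m,\cdot)\), so one must still pass through the boundedly invertible Volterra transmutation operator and the completeness of the perturbed eigenfunctions to conclude \(f\equiv 0\); with that repair the outline is correct and consistent with the cited source.
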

	\begin{Rmk}\label{rmk:FCFSbeta}
		Both eq. \eqref{eqn:GL-KC} and eq. \eqref{eqn:GL-S} presented in \cite{Hryniv} and \cite{diracbook} respectively were derived for \(\beta=0.\) However the form of the matrices appearing in the expressions for \(F_C\) \eqref{eqn:GL-FC} and \(F_S\) \eqref{eqn:GL-FS} actually depends only on \(\alpha\), see \cite[eq. 9.1.12, p. 130]{diracbook}. Thus, they remain valid for the case \(\beta \neq 0\) replacing the eigenvalues and norming constants of the problem \(L(p,q,\alpha,0)\) for those of the problem \(L(p,q,\alpha,\beta)\), for \(\alpha=-\pi/2, \) and \(\alpha=0\) respectively. At the same time, one needs to change the eigenvalues of the problem \(L(0,0,\alpha,0)\) for those of the problem \( L(0,0,\alpha,\beta).\) Note that the norming constants \(\alpha_m^0\) are the same for \(L(0,0,\alpha,0)\) and \( L(0,0,\alpha,\beta).\)
	\end{Rmk}
	Although the Gelfand-Levitan equation was derived for the kernel \(K_S(x,t)\) in \cite{diracbook}, the procedure remains valid for the case of \(K_\alpha(x,t).\) We state the equation here without proof.
	\begin{Thm}\label{thm:GL-Kalph}
		Let \(  \lambda_m \) and  \( \alpha_m \) be the eigenvalues and norming constants of the boundary value problem  \(L(p,q,\alpha,\beta), \, p,q\in L^2([0,1],\mathbb{R}), \; -\pi/2\leq\alpha,\beta<\pi/2.\) Additionally, let \(\lambda_{m}^0=m\pi+(\beta-\alpha)\) and \(\alpha_m^0=1\) denote the eigenvalues and norming constants of the problem \( L(0,0,\alpha,\beta)\). For each fixed \(x\in \left( 0, 1\right]\), the kernel \(K_\alpha(x,t)\) of the transmutation operator \eqref{eqn:TOPKalpha} satisfies the Gelfand-Levitan equation
		\begin{equation}\label{eqn:GL-Kalpha}
			K_\alpha(x,t)+F_\alpha(x,t)+\int_{0}^{x}K_\alpha(x,s)F_\alpha(s,t)ds=0, \quad 0\leq t \leq x \leq 1,
		\end{equation}
		where
		\begin{equation}\label{eqn:GL-Falpha}
			\begin{split}
				F_\alpha(x,t)=\sum_{m=-\infty}^{\infty} 	&\left[ \frac{1}{\alpha_m} \begin{pmatrix}
					\sin \left( \lambda_m x + \alpha \right) \sin \left( \lambda_m t + \alpha \right) & -\sin \left( \lambda_m x + \alpha \right) \cos \left( \lambda_m t + \alpha \right) \\
					-\cos \left( \lambda_m x + \alpha \right) \sin \left( \lambda_m t + \alpha \right) & \cos \left( \lambda_m x + \alpha \right) \cos \left( \lambda_m t + \alpha \right)
				\end{pmatrix} \right.\\
				&\left. - \begin{pmatrix}
					\sin \left( \lambda_m^0 x + \alpha \right) \sin \left( \lambda_m^0 t + \alpha \right) & -\sin \left( \lambda_m^0 x + \alpha \right) \cos \left( \lambda_m^0 t + \alpha \right) \\
					-\cos \left( \lambda_m^0 x + \alpha \right) \sin \left( \lambda_m^0 t + \alpha \right) & \cos \left( \lambda_m^0 x + \alpha \right) \cos \left( \lambda_m^0 t + \alpha \right)
				\end{pmatrix} \right].
			\end{split}
		\end{equation}
	\end{Thm}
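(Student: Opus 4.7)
The plan is to reproduce the derivation given in \cite[Ch. 9]{diracbook} for the kernel $K_S$ (the special case $\alpha=0$), with $S_0(\lambda,\cdot)$ replaced throughout by $\varphi_{\alpha,0}(\lambda,\cdot)$. The motivating observation is that each matrix summand in \eqref{eqn:GL-Falpha} is exactly the outer product $\varphi_{\alpha,0}(\lambda,x)\varphi_{\alpha,0}(\lambda,t)^T$, so that
\[
F_\alpha(x,t)=\sum_{m\in\mathbb{Z}}\Bigl[\tfrac{1}{\alpha_m}\varphi_{\alpha,0}(\lambda_m,x)\varphi_{\alpha,0}(\lambda_m,t)^T-\varphi_{\alpha,0}(\lambda_m^0,x)\varphi_{\alpha,0}(\lambda_m^0,t)^T\Bigr].
\]
The parameter $\alpha$ enters only through this shift in the free solution, not through the underlying functional-analytic structure, so the scheme from \cite{diracbook} carries over essentially verbatim.

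First I would use self-adjointness of $L(p,q,\alpha,\beta)$ to write the Parseval identity, which asserts that $\{\alpha_m^{-1/2}\varphi_\alpha(\lambda_m,\cdot)\}_{m\in\mathbb{Z}}$ is an orthonormal basis of $L^2([0,1],\mathbb{C}^2)$, and similarly for the free problem $L(0,0,\alpha,\beta)$. Polarization and restriction to $[0,x]$ then yield the distributional completeness identities
\[
\sum_{m\in\mathbb{Z}}\tfrac{1}{\alpha_m}\varphi_\alpha(\lambda_m,x)\varphi_\alpha(\lambda_m,t)^T=\delta(x-t)I=\sum_{m\in\mathbb{Z}}\varphi_{\alpha,0}(\lambda_m^0,x)\varphi_{\alpha,0}(\lambda_m^0,t)^T.
\]
Substituting the transmutation formula \eqref{eqn:TOPKalpha} into the left-hand identity, expanding the outer product, interchanging summation with integration, and collapsing diagonal sums via $\sum_m\alpha_m^{-1}\varphi_{\alpha,0}(\lambda_m,\xi)\varphi_{\alpha,0}(\lambda_m,t)^T=F_\alpha(\xi,t)+\delta(\xi-t)I$ reduces the resulting identity to
\[
G(x,t)+\int_0^t G(x,\eta)K_\alpha(t,\eta)^T\,d\eta=0,\quad G(x,t):=K_\alpha(x,t)+F_\alpha(x,t)+\int_0^x K_\alpha(x,\xi)F_\alpha(\xi,t)\,d\xi.
\]
For fixed $x$, this is a homogeneous Volterra integral equation of the second kind for $G(x,\cdot)$ on $[0,x]$, whose unique solution is $G\equiv 0$; that identity is precisely \eqref{eqn:GL-Kalpha}.

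The main obstacle is the rigorous justification of the interchanges of summation and integration, since---as remark \ref{rmk:nonconvergent} shows for $F_C$---the series for $F_\alpha$ need not converge pointwise. The standard remedy, already used in the $K_S$ case, is to pair terms and rewrite each summand as $(\alpha_m^{-1}-1)\varphi_{\alpha,0}(\lambda_m,x)\varphi_{\alpha,0}(\lambda_m,t)^T$ plus the free-solution difference $\varphi_{\alpha,0}(\lambda_m,x)\varphi_{\alpha,0}(\lambda_m,t)^T-\varphi_{\alpha,0}(\lambda_m^0,x)\varphi_{\alpha,0}(\lambda_m^0,t)^T$, then invoke the $l^2$ asymptotics $\alpha_m=1+r_m$, $\lambda_m-\lambda_m^0=h_m$ from Theorems \ref{thm:asym-lambda} and \ref{thm:asym-nc} (valid for $\beta\neq 0$ by remark \ref{rmk:FCFSbeta}) to conclude $L^2([0,x]\times[0,x],\mathcal{M}_2)$ convergence. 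All distributional manipulations are then interpreted weakly against smooth $\mathbb{C}^2$-valued test functions with compact support in $(0,x)$, exactly as in \cite[Ch. 9]{diracbook}; no analytic input beyond these well-known asymptotics is required.
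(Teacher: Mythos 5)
Your proposal is correct and follows exactly the route the paper intends: the theorem is stated without proof precisely because the derivation of the Gelfand--Levitan equation for \(K_S\) in \cite[Ch.~9]{diracbook} carries over, and your argument is that derivation with \(S_0\) replaced by \(\varphi_{\alpha,0}\) --- the identification of the summands of \eqref{eqn:GL-Falpha} as outer products \(\varphi_{\alpha,0}(\lambda,x)\varphi_{\alpha,0}(\lambda,t)^T\), the two completeness relations, the reduction to the homogeneous Volterra equation for \(G(x,\cdot)\), and the \(l^2\)-asymptotics justification of the interchanges are exactly the standard steps. No gaps to report.
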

	\begin{Rmk}\label{rmk:Falph0}
		Although theorem \ref{thm:GL-KS} is a particular case of theorem \ref{thm:GL-Kalph} we decided to include it to show the following relation among the expressions for \(F_S\) and \(F_\alpha.\)  Note that
		\begin{equation}\label{eqn:Falph0}
			F_\alpha(x,t)=R_\alpha F_{\alpha,0}(x,t) R_\alpha^T.
		\end{equation}
		where \(F_{\alpha,0}(x,t)\) has the same expression as the right-hand side of eq. \eqref{eqn:GL-FS} but with the eigenvalues \(\lambda_{m}\) and norming constants \(\alpha_{m}\) of the problem \(L(p,q,\alpha,\beta)\) instead of those of the problem \(L(p,q,0,0).\) Also, the eigenvalues \(\lambda_{m}^0\) and norming constants \(\alpha_{m}^0\) are those corresponding to the problem \(L(0,0,\alpha,\beta)\) instead of those of \(L(0,0,0,0).\)  Thus, we write  \(F_{\alpha,0}(x,t)\) to be explicit about this dependence of the spectral data and avoid confusion. See also remark \ref{rmk:WCWalph}.
	\end{Rmk}
	
	\section{A method of solution of the Gelfand-Levitan equation}\label{sec:linsys}
	We begin this section by introducing a matrix-valued inner product. The idea to use a matrix-valued inner product appeared naturally during the development of this work. However, we are not aware of a standard reference for it. For that reason, we include some of the properties that we will use in this section. We would like to mention that the concept of a matrix-valued inner product has been proposed in \cite{Xia98,Anto, GinWalden} and recently it has been studied in more detail in \cite{inner}. Let us denote
	\begin{equation}\label{eqn:ip}
		\langle \cdot, \cdot \rangle: L^2([a,b],\mathcal{M}_2)\times L^2([a,b],\mathcal{M}_2) \to \mathcal{M}_2, \quad \langle A_1,A_2\rangle :=\int_{a}^{b} A_1(t)A_2^*(t) \; dt.
	\end{equation}
	
	\begin{Prop}\label{prop:ip}
		Let \(A_1, A_2, A_3 \in L^2([a,b],\mathcal{M}_2),\) \(D\in \mathcal{M}_2\) and \(\lambda \in \mathbb{C}\). The matrix-valued inner product \( \langle \cdot, \cdot \rangle \) satisfies the following properties:
		\begin{enumerate}
			\item \( \langle \lambda A_1+A_2,A_3 \rangle =\lambda \langle A_1,A_3\rangle + \langle A_2,A_3\rangle. \)
			\item \( \langle  DA_1, A_2 \rangle  = D\langle  A_1, A_2 \rangle. \)
			\item \( \langle  A_1, A_2 \rangle  = \langle  A_2, A_1 \rangle^*. \)
			\item \( \left\vert \langle  A_1,A_2 \rangle  \right\vert \leq \Vert A_1 \Vert_{L^2([a,b])} \Vert A_2 \Vert_{L^2([a,b])}. \)
			\item If \( A=\sum_{n=0}^{\infty} A_n \) is a convergent series in \( L^2([a,b], \mathcal{M}_2) \) and \(P \in  L^2([a,b], \mathcal{M}_2) \), then
			\[
			\langle  A, P \rangle  =\sum_{n=0}^{\infty} \langle A_n, P \rangle , \quad 		\langle  P,A \rangle  = \sum_{n=0}^{\infty} \langle P, A_n \rangle .
			\]
		\end{enumerate}
	\end{Prop}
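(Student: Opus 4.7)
The plan is to verify the five properties in essentially the order stated, since each builds only on standard facts about matrix multiplication, the operator norm, and the scalar $L^2$ inner product. The first three are routine algebraic manipulations under the integral. For (1), expanding $\langle \lambda A_1+A_2,A_3\rangle=\int_a^b(\lambda A_1(t)+A_2(t))A_3^*(t)\,dt$ and distributing matrix multiplication and integration over the sum gives the result. For (2), using associativity $D(A_1(t)A_2^*(t))=(DA_1(t))A_2^*(t)$ and pulling the constant matrix $D$ outside the integral yields the identity. For (3), the identity $(A_1(t)A_2^*(t))^*=A_2(t)A_1^*(t)$ together with the fact that the adjoint commutes with the (Bochner) integral of matrix-valued functions gives $\langle A_1,A_2\rangle^*=\langle A_2,A_1\rangle$.

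Property (4) is the Cauchy--Schwarz-type bound and is the one real computation. The plan is to invoke the remark after \eqref{eqn:norm2} to write
\[
\left|\langle A_1,A_2\rangle\right|=\left|\int_a^b A_1(t)A_2^*(t)\,dt\right|\le \int_a^b |A_1(t)A_2^*(t)|\,dt.
\]
Then use submultiplicativity of the operator norm and the fact that, for the norm induced by the Euclidean norm on $\mathbb{C}^2$, $|A^*|=|A|$, to obtain $|A_1(t)A_2^*(t)|\le |A_1(t)|\,|A_2(t)|$. Finally apply the scalar Cauchy--Schwarz inequality to the nonnegative functions $|A_1(\cdot)|,|A_2(\cdot)|\in L^2([a,b],\mathbb{R})$ to conclude
\[
\int_a^b |A_1(t)|\,|A_2(t)|\,dt\le \|A_1\|_{L^2([a,b])}\,\|A_2\|_{L^2([a,b])}.
\]

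For (5), the plan is to deduce continuity of $\langle \cdot,\cdot\rangle$ in each argument directly from (1) and (4). Denoting $S_N:=\sum_{n=0}^N A_n$, by (1) we have $\langle S_N,P\rangle=\sum_{n=0}^N\langle A_n,P\rangle$, and by (4) together with the $L^2$-convergence $S_N\to A$,
\[
\bigl|\langle A,P\rangle-\langle S_N,P\rangle\bigr|=\bigl|\langle A-S_N,P\rangle\bigr|\le \|A-S_N\|_{L^2([a,b])}\,\|P\|_{L^2([a,b])}\longrightarrow 0,
\]
which gives $\langle A,P\rangle=\sum_{n=0}^\infty\langle A_n,P\rangle$. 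The companion identity $\langle P,A\rangle=\sum_{n=0}^\infty\langle P,A_n\rangle$ follows by taking adjoints and applying (3). Since every step reduces to invoking either a basic algebraic identity or the already-stated remark on $|\int A|\le\int|A|$, no substantive obstacle arises; the most delicate point is simply making sure the operator-norm estimate $|A_1A_2^*|\le|A_1||A_2|$ is justified by submultiplicativity and the invariance of the operator norm under the adjoint, which are standard for the Euclidean-induced norm.
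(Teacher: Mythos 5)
Your proposal is correct and follows essentially the same route as the paper: properties (1)--(3) are routine algebra under the integral, (4) combines the remark $\vert\int A\vert\le\int\vert A\vert$ with submultiplicativity and scalar Cauchy--Schwarz, and (5) follows from the continuity in each argument that (4) provides. You simply spell out a few details the paper leaves implicit, such as $\vert A^*\vert=\vert A\vert$ for the Euclidean-induced operator norm and the explicit telescoping of the partial sums.
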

	\begin{proof}
		The proof of the first three properties is straightforward. The fourth property follows from the Cauchy-Schwarz inequality and basic properties of the matrix norm,
		\[
		\left\vert \langle  A_1,A_2 \rangle  \right\vert = \left\vert \int_{a}^{b} A_1(t)A_2^*(t) \; dt \right\vert \leq \int_{a}^{b} \left\vert  A_1(t) \right\vert \left\vert A_2^*(t) \right\vert \; dt \leq \Vert A_1 \Vert_{L^2([a,b])} \Vert A_2 \Vert_{L^2([a,b])},
		\]
		which shows that \( \langle \cdot , A \rangle, \; \langle A , \cdot \rangle : L^2([a,b],\mathcal{M}_2) \to \mathcal{M}_2  \) are continuous. Hence, the fifth property holds.
	\end{proof}
	\begin{Def}
		Let \(A_1, A_2 \in L^2([a,b],\mathcal{M}_2).\) We say that \(A_1,A_2\) are orthogonal if \( \langle A_1,A_2 \rangle={0}_2,\) where \(0_2\) denotes the null matrix of the space \(\mathcal{M}_2\). A set of matrix-valued functions is called an orthogonal set if any two distinct matrix-valued functions in the set are orthogonal. A sequence \( \{A_k\}_{k=0}^\infty \subseteq L^2([a,b],\mathcal{M}_2)\) is called an orthonormal set in \(L^2([a,b],\mathcal{M}_2)\) if
		\[
			\langle A_k, A_j \rangle = \delta_{kj} I, \; k,j =0,1,\ldots,
		\]
		where \(\delta_{kj}=1\) when \(k=j\) and \(\delta_{kj}=0\) otherwise.
	\end{Def}
	\begin{Def}
		An orthonormal sequence  \( \{A_k\}_{k=0}^\infty \subseteq L^2([a,b],\mathcal{M}_2) \) is called an orthonormal basis of \(L^2([a,b],\mathcal{M}_2)\) if for any \(F \in L^2([a,b],\mathcal{M}_2)\) there exist a sequence of constant matrices \(\{F_k\}_{k=0}^\infty \subset \mathcal{M}_2\) such that
		\[
			F=\sum_{k=0}^\infty F_k A_k, \quad F_k = \langle F, A_k \rangle,
		\]
		 and the series converges with respect to the norm \eqref{eqn:norm2}.
	\end{Def}
	\begin{Rmk}
			For more details about the matrix-valued inner product, we invite the reader to consult the reference \cite{inner}. We introduced the matrix-valued inner product to have a concise notation and reduce computations. However, the orthogonal sequences considered in our work do not need such a detailed analysis. For instance, consider the sequence \(A_n^C(x,t)\) and take \(F=(F_{ij})_{i,j=1}^2\in L^2([0,x], \mathcal{M}_2).\) Then,
		\[
		F(t)A_n^C(x,t) = \begin{pmatrix}
			F_{11}(t) & F_{12}(t) \\ F_{21}(t) & F_{22}(t)
		\end{pmatrix} \begin{pmatrix}
		P_{2n}(\frac{t}{x}) & 0 \\ 0 & P_{2n+1}(\frac{t}{x})
		\end{pmatrix} = \begin{pmatrix}
		F_{11}(t)P_{2n}(\frac{t}{x}) & F_{12}(t)P_{2n+1}(\frac{t}{x}) \\ F_{21}(t)P_{2n}(\frac{t}{x}) & F_{22}(t)P_{2n+1}(\frac{t}{x})
		\end{pmatrix}.
		\]
		Hence, matrix Fourier-Legendre series representations such as those in \eqref{eqn:KCKS} are just Fourier-Legendre series entry-wise.
	\end{Rmk}
	\subsection{Reduction of the Gelfand-Levitan equation to a linear system of algebraic equations}
	In this section we obtain a linear system of algebraic equations for the Gelfand-Levitan equation for the kernels \(K_C,\) \(K_S\) and \(K_\alpha.\) From now on, \( \langle \cdot, \cdot \rangle: L^2([0,x],\mathcal{M}_2)\times L^2([0,x],\mathcal{M}_2) \to \mathcal{M}_2 \) denotes the matrix-valued inner product on the interval \([0,x].\)\\
	
	Let us start with the kernel \(K_C.\) Since \(F_C\) is real-valued, we observe that the Gelfand-Levitan equation can be rewritten as
	\begin{equation}\label{eqn:GL-KCip}
		K_C(x,t)+\left \langle K_C(x,\cdot), F_C(\cdot,t)^T \right \rangle = - F_C(x,t).
	\end{equation}
	Since \(F_C \in L^2([0,x]\times [0,x], \mathcal{M}_2)\), it is a Hilbert-Schmidt kernel. Hence, it follows that the linear operator
	\begin{equation}
		\mathcal{F}_C: L^2([0,x],\mathcal{M}_2) \to L^2([0,x],\mathcal{M}_2), \quad X \mapsto \mathcal{F}_C[X](t):=\left \langle X, F_C(\cdot,t)^T \right \rangle,
	\end{equation}
	is a compact linear operator and the Gelfand-Levitan equation in terms of \( \mathcal{F}_C\) can be written
	\begin{equation}\label{eqn:GL-IF}
		(\mathcal{I}+\mathcal{F}_C)[K_C(x,\cdot)]=-F_C(x,\cdot),
	\end{equation}
	where \(\mathcal{I}\) denotes the identity operator. This observation is important to establish some stability results further down below. As a basis of \(L^2([0,x],\mathcal{M}_2)\) we choose the \(\mathcal{M}_2\)-valued functions \(\{A_n^C\}_{n=0}^\infty\) defined in \eqref{eqn:Kcn}, which is a complete orthogonal system with respect to the matrix-valued inner product \eqref{eqn:ip}.
	Substitution of the Fourier-Legendre series representation \eqref{eqn:KCKS} for \(K_C\) in \eqref{eqn:GL-KCip} leads to
	\begin{equation}\label{eqn:GLfn}
		\sum_{n=0}^{\infty} \frac{1}{x} K^C_n(x) \left[ A_n^C\left( x,t \right) + f^C_n(x,t) \right]=-\frac{1}{2}F_C(x,t), \quad 	 f^C_n(x,t):=\left \langle A_n^C(x, \cdot), F_C(\cdot,t)^T \right \rangle.
	\end{equation}
	where we have used that the Fourier-Legendre series for \(K^C\) is convergent, see \cite{RoqTorNSBF} for details.
	We know that \(f^C_n(x,\cdot) \in L^2([0,x],\mathcal{M}_2).\) Furthermore, \( F_C(\cdot,t) \in L^2([0,x], \mathcal{M}_2)\). Using eq. \eqref{eqn:GL-FC}  and the continuity property of the inner product (see proposition \ref{prop:ip}) we have that
	\begin{equation}
		f^C_n(x,t) = \sum_{m=-\infty}^\infty \left \langle A_n^C(x, \cdot), \frac{1}{\alpha_m} M_C(\lambda_{m},\cdot,t)^T - M_C(\lambda_{m}^0,\cdot,t)^T \right \rangle.
	\end{equation}
	From the definition \eqref{eqn:Mlambxt} of \(M_C(\lambda,x,t)\)  it follows that
	\begin{equation}
		\left \langle A_n^C(x, \cdot), M_C(\lambda,\cdot,t)^T  \right \rangle = \int_{0}^{x} \begin{pmatrix}
			P_{2n}\left( \frac{s}{x} \right) \cos \lambda s \cos \lambda t & P_{2n}\left( \frac{s}{x} \right) \cos \lambda s \sin \lambda t \\
			P_{2n+1}\left( \frac{s}{x} \right) \sin \lambda s \cos \lambda t & P_{2n+1}\left( \frac{s}{x} \right) \sin \lambda s \sin \lambda t
		\end{pmatrix} \; ds.
	\end{equation}
	Thus, using the formulas \eqref{eqn:intPncs} we get
	\begin{equation}\label{eqn:fCn}
		f^C_n(x,t)=(-1)^nx\sum_{m=-\infty}^{\infty}\left[ \frac{1}{\alpha_m} V^C_n(\lambda_m,x,t)-V^C_n(\lambda_m^0,x,t) \right],
	\end{equation}
	where
	\begin{equation}
		V^C_n(\lambda,x,t):=\begin{pmatrix}  j_{2n}(\lambda x) \cos \lambda t &  j_{2n}(\lambda x) \sin \lambda t \\
			j_{2n+1}(\lambda x) \cos \lambda t &  j_{2n+1}(\lambda x) \sin \lambda t \end{pmatrix},
	\end{equation}
	Now, we multiply both sides of the equality \eqref{eqn:GLfn} by \( A^C_k(x,t) \) from the right, and we integrate from zero to \(x\) with respect to the variable \(t\). For the right-hand side we obtain
	\begin{equation}
		-\frac{1}{2}f^C_k(x,x)^T=\left \langle -\frac{1}{2}F_C(x,\cdot), A^C_k(x,\cdot) \right \rangle=(-1)^{k+1} \frac{1}{2}x \sum_{m=-\infty}^{\infty} \left[ \frac{1}{\alpha_m} V^C_k(\lambda_m, x, x)^T- V^C_k(\lambda_m^0, x, x)^T \right].
	\end{equation}
	After similar computations for the left-hand side, we obtain the following theorem.
	\begin{Thm}
		The coefficients \(K_k^C\) satisfy the system of linear algebraic equations
		\begin{equation}\label{eqn:main-system}
			K^C_k(x)D^C_k+\sum_{n=0}^{\infty} K^C_n(x)W^C_{nk}(x)=-\frac{1}{2}f^C_k(x,x)^T, \quad k=0,1,\ldots,
		\end{equation}
		where
		\begin{equation}\label{eqn:WCnk}
			\begin{split}
				W^C_{nk}(x):=(-1)^{n+k}x\sum_{m=-\infty}^{\infty}&\left[ \frac{1}{\alpha_m}\begin{pmatrix} j_{2n}(\lambda_m x) j_{2k}(\lambda_m x) & j_{2n}(\lambda_m x)j_{2k+1}(\lambda_m x)  \\
					j_{2n+1}(\lambda_m x) j_{2k}(\lambda_m x) & j_{2n+1}(\lambda_m x) j_{2k+1}(\lambda_m x) \end{pmatrix}\right.  \\
				&\left. - \begin{pmatrix} j_{2n}(\lambda_m^0 x) j_{2k}(\lambda_m^0 x) & j_{2n}(\lambda_m^0 x)j_{2k+1}(\lambda_m^0 x) \\
					j_{2n+1}(\lambda_m^0 x) j_{2k}(\lambda_m^0 x) & j_{2n+1}(\lambda_m^0 x) j_{2k+1}(\lambda_m^0 x) \end{pmatrix} \right] .
			\end{split}
		\end{equation}
		and
		\begin{equation}
			D^C_k:=\begin{pmatrix}
				\frac{1}{4k+1} & 0 \\ 0 & \frac{1}{4k+3}
			\end{pmatrix}.
		\end{equation}
	\end{Thm}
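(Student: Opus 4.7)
The plan is to project the Gelfand-Levitan equation, already rewritten in the form \eqref{eqn:GLfn}, onto each matrix-valued function $A_k^C$. Concretely, I multiply both sides of \eqref{eqn:GLfn} on the right by $A_k^C(x,t)$ and integrate in $t$ from $0$ to $x$. Because the Fourier-Legendre series for $K_C$ converges in $L^2([0,x],\mathcal{M}_2)$, the continuity of the matrix-valued inner product (item 5 of Proposition \ref{prop:ip}) legitimates the interchange of summation and integration on the left-hand side.

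For the first term of the left-hand side I need $\int_0^x A_n^C(x,t) A_k^C(x,t)\,dt$. Since $A_n^C$ is diagonal with entries $P_{2n}(t/x)$ and $P_{2n+1}(t/x)$, the product is diagonal with only same-parity Legendre polynomials, and the identities $\int_0^1 P_{2n}(u)P_{2k}(u)\,du = \delta_{nk}/(4k+1)$, $\int_0^1 P_{2n+1}(u)P_{2k+1}(u)\,du = \delta_{nk}/(4k+3)$ (immediate from the standard orthogonality on $[-1,1]$ and the parity relation $P_n(-u)=(-1)^n P_n(u)$) collapse the sum to $K_k^C(x) D_k^C$. For the second term, I substitute the explicit formula \eqref{eqn:fCn} for $f_n^C(x,t)$, multiply $V_n^C(\lambda,x,t)$ by $A_k^C(x,t)$, and apply the integrals \eqref{eqn:intPncs} one more time; each $\cos\lambda t\,P_{2k}(t/x)$ and $\sin\lambda t\,P_{2k+1}(t/x)$ integrates to $(-1)^k x\,j_{2k}(\lambda x)$ and $(-1)^k x\,j_{2k+1}(\lambda x)$ respectively, producing the global factor $(-1)^{n+k} x$ and the $2\times 2$ matrix of products of spherical Bessel functions that is exactly $W_{nk}^C(x)$.

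For the right-hand side I compute $-\tfrac{1}{2}\int_0^x F_C(x,t) A_k^C(x,t)\,dt$ directly from the explicit representation \eqref{eqn:GL-FC}, again using \eqref{eqn:intPncs}. To recognise the result as $-\tfrac{1}{2} f_k^C(x,x)^T$, I would invoke the symmetry property $F_C(x,t) = F_C(t,x)^T$, which is immediate from the structure $M_C(\lambda,x,t) = M_C(\lambda,t,x)^T$ of the summands in \eqref{eqn:GL-FC}: transposing the defining integral for $f_k^C(x,x)$ and using $A_k^C{}^T = A_k^C$ returns precisely $\int_0^x F_C(x,t) A_k^C(x,t)\,dt$.

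The main obstacle is not any single calculation but the careful bookkeeping: correctly tracking transposes in the matrix inner product (so that the product of matrices in the integrand of $f_n^C$ and the order of factors in the linear system both come out right), keeping the parity-driven signs $(-1)^n$, $(-1)^k$ consistent, and justifying each interchange of infinite series with an integral through the $L^2$-convergence of \eqref{eqn:KCKS} together with the continuity in Proposition \ref{prop:ip}. Once these points are handled, the linear system \eqref{eqn:main-system} with the stated $W_{nk}^C$ and $D_k^C$ follows by equating coefficients.
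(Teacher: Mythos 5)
Your proposal is correct and follows essentially the same route as the paper: right-multiplying \eqref{eqn:GLfn} by \(A_k^C(x,t)\), integrating over \([0,x]\), using the half-interval orthogonality of same-parity Legendre polynomials to produce \(K_k^C D_k^C\), applying \eqref{eqn:intPncs} to turn \(f_n^C\) into \(W_{nk}^C\), and identifying the right-hand side as \(-\tfrac12 f_k^C(x,x)^T\) via the symmetry \(M_C(\lambda,x,t)=M_C(\lambda,t,x)^T\). The sign bookkeeping \((-1)^{n+k}\) and the transpose argument both check out against the paper's computation.
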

	We point out that a truncation of the linear system of equations coincides with the application of the Bubnov-Galerkin method after a suitable normalization of the coefficients. Consider the set of functions
	\begin{equation}
		p_k(t):=\frac{\sqrt{2k+1}}{\sqrt{x}}P_k\left(\frac{t}{x} \right), \quad k=0,1,2,\ldots,
	\end{equation}
	and consider the set of matrix-valued functions
	\begin{equation}
		\widehat{A}_k^C:=\begin{pmatrix}
			p_{2k}(t) & 0 \\ 0 & p_{2k+1}(t)
		\end{pmatrix}.
	\end{equation}
	Then, \(\widehat{A}_k^C, \; k=0,1,2,\ldots\) form an orthonormal basis of \(L^2([0,x], \mathcal{M}_2)\) with respect to the inner product \eqref{eqn:ip} with matrix-valued coefficients. The matrix coefficients \(\frac{K_k^C (D_k^C)^{1/2}}{\sqrt{x}}\) are the Fourier coefficients of the kernel \(K_C(x,t)\) with respect to the basis \( \{ \widehat{A}_k^C \}_{k=0}^\infty.\) Multiplying \eqref{eqn:main-system} from the right by \( \frac{ (D^C_k)^{-1/2}}{\sqrt{x}}\), the system of equations can be rewritten as
	\begin{equation}\label{eqn:normalized-system}
		\left( \frac{K_k^C (D^C_k)^{1/2}}{\sqrt{x}} \right) + \sum_{n=0}^\infty \left( \frac{K_n^C (D^C_n)^{1/2}}{\sqrt{x}} \right) \left[ (D^C_n)^{-1/2}W^C_{nk}(x) (D^C_k)^{-1/2}  \right] = -\frac{1}{2} \left[ f^C_k(x,x)^T \frac{(D^C_k)^{-1/2}}{\sqrt{x}}\right].
	\end{equation}
	A thorough investigation of the existence and stability of the solution for the truncated system in the case of the Schrödinger operator can be found in \cite{KravTorbaInv}. The analysis for our system \eqref{eqn:normalized-system} is rather similar and therefore we do not repeat it. The reader is highly encouraged to read \cite[section 3.5]{KravTorbaInv} for details, since the arguments appearing there just have to be applied entry-wise. See also appendix \ref{section:appendix-bg} for a brief description of the Bubnov-Galerkin method and a well-known result regarding its stability. Nonetheless, let us state a summarizing proposition for the sake of clarity, which is analogous to \cite[proposition 3.9]{KravTorbaInv}. Let us set
	\begin{equation}
		\xi_k := \frac{K^C_k(x)(D_k^C)^{1/2}}{\sqrt{x}}, \quad a_{nk}:= (D^C_n)^{-1/2}W^C_{nk}(x) (D^C_k)^{-1/2}, \quad b_k:= -\frac{1}{2} \left[ f^C_k(x,x)^T \frac{(D^C_k)^{-1/2}}{\sqrt{x}}\right].
	\end{equation}
	\begin{Prop}
		For a sufficiently large \(N\), the truncated system
		\begin{equation}
			\xi_k + \sum_{n=0}^N \xi_k a_{nk} = b_k, \quad k=0,1,\ldots, N,
		\end{equation}
		has a unique solution \(U_N=\{ \xi_k^N\}_{k=0}^N\). Moreover, for small changes in the coefficients \(a_{nk}\) and \(b_k\) (e.g., due to the truncation of \eqref{eqn:fCn} and \eqref{eqn:WCnk}), the condition number of the truncated system is uniformly bounded with respect to \(N\), and the solution \(U_N\) is stable.
	\end{Prop}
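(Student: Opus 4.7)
The plan is to identify the truncated linear system as the Bubnov-Galerkin discretization of the operator equation $(\mathcal{I}+\mathcal{F}_C)[K_C(x,\cdot)]=-F_C(x,\cdot)$ in the orthonormal basis $\{\widehat{A}_k^C\}_{k=0}^\infty$ of $L^2([0,x],\mathcal{M}_2)$, and then invoke the standard stability theorem for Bubnov-Galerkin applied to compact perturbations of the identity (summarized in Appendix \ref{section:appendix-bg}). First, I would verify that the change of variables $\xi_k = K_k^C(x)(D_k^C)^{1/2}/\sqrt{x}$ together with the right-multiplications $(D_k^C)^{-1/2}/\sqrt{x}$ transform the coefficients $K_n^C$ and $F_C$ into their genuine Fourier coefficients with respect to the orthonormal system $\{\widehat{A}_k^C\}$, so that the array $(a_{nk})$ is the matrix representation of $\mathcal{F}_C$ in this basis and $(b_k)$ is the expansion of $-\tfrac{1}{2}F_C(x,\cdot)$.

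Second, I would use the fact that $F_C\in L^2([0,x]\times[0,x],\mathcal{M}_2)$ to conclude that $\mathcal{F}_C$ is a Hilbert-Schmidt (hence compact) operator on $L^2([0,x],\mathcal{M}_2)$; the compactness is inherited entrywise from the scalar $L^2$ setting because of the remark identifying matrix Fourier-Legendre expansions with entrywise expansions. Unique solvability of the Gelfand-Levitan equation (Theorem \ref{thm:glm}) then asserts that $\mathcal{I}+\mathcal{F}_C$ is injective, and the Fredholm alternative upgrades this to bijectivity with a bounded inverse on $L^2([0,x],\mathcal{M}_2)$.

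Third, I would apply the classical Bubnov-Galerkin convergence and stability result for equations of the form $(\mathcal{I}+\mathcal{K})u=f$ with $\mathcal{K}$ compact and $\mathcal{I}+\mathcal{K}$ invertible. The orthogonal projections $P_N$ onto $\mathrm{span}\{\widehat{A}_k^C\}_{k=0}^N$ converge strongly to the identity, so $P_N \mathcal{F}_C P_N \to \mathcal{F}_C$ in operator norm on the range of $P_N$. Consequently, for all sufficiently large $N$ the truncated operator $\mathcal{I}+P_N\mathcal{F}_C P_N$ is invertible on $P_N L^2([0,x],\mathcal{M}_2)$, its inverse is uniformly bounded in $N$, and hence the condition number of the finite block matrix $I+[a_{nk}]_{n,k=0}^N$ is uniformly bounded. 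Existence and uniqueness of $U_N=\{\xi_k^N\}_{k=0}^N$, together with stability under small perturbations of $(a_{nk})$ and $(b_k)$, follow immediately from the bounded inverse.

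The only non-routine point is to ensure that the scalar-valued Bubnov-Galerkin theory from \cite{KravTorbaInv} transfers to the matrix-valued inner product setting. This is what I expect to be the main (though ultimately mild) obstacle: one needs the matrix coefficients $\xi_k$, $a_{nk}$, $b_k$ to be treated as blocks of the ordinary scalar Galerkin system obtained by separating matrix entries, as already noted in the remark following the definition of an orthonormal basis. Once that identification is made, the conclusion reduces to \cite[Proposition 3.9]{KravTorbaInv} applied entrywise, so I would simply indicate this reduction and refer to Appendix \ref{section:appendix-bg} rather than reproducing the proof.
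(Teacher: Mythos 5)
Your proposal is correct and follows essentially the same route as the paper: the paper likewise identifies the truncated system \eqref{eqn:normalized-system} as the Bubnov--Galerkin discretization of \eqref{eqn:GL-IF} with the compact (Hilbert--Schmidt) operator $\mathcal{F}_C$ and the orthonormal system $\{\widehat{A}_k^C\}$, invokes unique solvability from Theorem \ref{thm:glm} together with the stability theorem recalled in Appendix \ref{section:appendix-bg}, and reduces the matrix-valued setting to the scalar case of \cite[Section 3.5, Proposition 3.9]{KravTorbaInv} applied entrywise. Your added detail on the norm convergence $P_N\mathcal{F}_C P_N\to\mathcal{F}_C$ and the Fredholm alternative is a correct fleshing-out of the same argument rather than a different approach.
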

	Similarly, thanks to eq. \eqref{eqn:Falph0} (see remark \ref{rmk:Falph0}) we can rewrite eq. \eqref{eqn:GL-Kalpha} as
	\begin{equation}
		K_\alpha(x,t)+\int_{0}^{x}K_\alpha(x,s)R_\alpha F_{\alpha,0}(s,t) R_\alpha^T\, ds=-R_\alpha F_{\alpha,0}(x,t)R_\alpha^T.
	\end{equation}
	Substitution of \eqref{eqn:KalphaASR} in the equation above together with right-multiplication by \(R_\alpha\) yields
	\begin{equation}
		\sum_{n=0}^\infty \frac{1}{x}K_n^\alpha(x)\left[A_n^S(x,t)+\int_{0}^{x}A_n^S(x,s)F_{\alpha,0}(s,t) \, ds\right]=-\frac{1}{2}R_\alpha F_{\alpha,0}(x,t).
	\end{equation}
	From here, we can follow the same procedure as before to obtain a linear system of algebraic equations for the coefficients \(K_n^\alpha.\)
	\begin{Thm}
		The coefficients \(K_k^\alpha\) of the kernel \(K_\alpha(x,t)\) \eqref{eqn:KalphaASR} satisfy the linear system of algebraic equations
		\begin{equation}\label{eqn:system-Kalpha}
			K^\alpha_k(x)D^S_k+\sum_{n=0}^{\infty} K^\alpha_n(x)W^\alpha_{nk}(x)=-\frac{1}{2}R_\alpha f^\alpha_k(x,x)^T, \quad k=0,1,\ldots,
		\end{equation}
		where
		\begin{equation}
			\begin{split}
				W^\alpha_{nk}(x):=(-1)^{n+k}x\sum_{m=-\infty}^{\infty}&\left[ \frac{1}{\alpha_m}\begin{pmatrix} j_{2n+1}(\lambda_m x) j_{2k+1}(\lambda_m x) & -j_{2n+1}(\lambda_m x)j_{2k}(\lambda_m x)  \\
					-j_{2n}(\lambda_m x) j_{2k+1}(\lambda_m x) & j_{2n}(\lambda_m x) j_{2k}(\lambda_m x) \end{pmatrix}\right.  \\
				&\left. - \begin{pmatrix} j_{2n+1}(\lambda_m^0 x) j_{2k+1}(\lambda_m^0 x) & -j_{2n+1}(\lambda_m^0 x)j_{2k}(\lambda_m^0 x) \\
					-j_{2n}(\lambda_m^0 x) j_{2k+1}(\lambda_m^0 x) & j_{2n}(\lambda_m^0 x) j_{2k}(\lambda_m^0 x) \end{pmatrix} \right] ,
			\end{split}
		\end{equation}
		\begin{equation}
			f_k^\alpha(x,t)=(-1)^k x \sum_{m=-\infty}^\infty \left[\frac{1}{\alpha_m}V_k^\alpha(\lambda_{m},x,t)-V_k^\alpha(\lambda_{m}^0,x,t)\right],
		\end{equation}
		and
		\begin{equation}
			V_k^\alpha(\lambda,x,t) = \begin{pmatrix}
				j_{2n+1}(\lambda x) \sin \lambda t & -j_{2n+1}(\lambda x) \cos \lambda t \\
				-j_{2n}(\lambda x) \sin \lambda t & j_{2n}(\lambda x) \cos \lambda t
			\end{pmatrix},
			\quad
			D^S_k:=\begin{pmatrix}
				\frac{1}{4k+3}	 & 0 \\ 0 & \frac{1}{4k+1}
			\end{pmatrix}.
		\end{equation}
	\end{Thm}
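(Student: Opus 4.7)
The plan is to mimic line by line the derivation of system \eqref{eqn:main-system} for the coefficients $K_k^C$, replacing $A_n^C$ by $A_n^S$, $K_n^C$ by $K_n^\alpha$, and $F_C$ by $R_\alpha F_{\alpha,0}$. The starting point is the equation
\begin{equation*}
\sum_{n=0}^\infty \frac{1}{x}K_n^\alpha(x)\left[A_n^S(x,t)+\int_{0}^{x}A_n^S(x,s)F_{\alpha,0}(s,t)\,ds\right]=-\frac{1}{2}R_\alpha F_{\alpha,0}(x,t),
\end{equation*}
already obtained above from \eqref{eqn:GL-Kalpha} by right-multiplication by $R_\alpha$, using the factorization $F_\alpha=R_\alpha F_{\alpha,0}R_\alpha^T$ from remark \ref{rmk:Falph0}, the representation \eqref{eqn:KalphaASR}, and $R_\alpha^T R_\alpha=I$.

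Next I take the matrix-valued inner product of both sides against $A_k^S(x,\cdot)$ on $[0,x]$ and interchange sum with inner product by property (5) of proposition \ref{prop:ip}. From $\int_0^x P_n(s/x)^2\,ds=x/(2n+1)$ and the mutual orthogonality of the Legendre polynomials, the diagonal structure of $A_n^S$ yields $\langle A_n^S(x,\cdot),A_k^S(x,\cdot)\rangle=\delta_{nk}\,x\,D_k^S$, which produces the leading term $K_k^\alpha(x)D_k^S$ on the left. For the integral summand, Fubini reduces the computation to first evaluating $\int_0^x A_n^S(x,s)F_{\alpha,0}(s,t)\,ds$. Since each entry of $F_{\alpha,0}$ is a product of $\sin$ or $\cos$ in $\lambda_m s$ with $\sin$ or $\cos$ in $\lambda_m t$ (see remark \ref{rmk:Falph0} together with \eqref{eqn:GL-FS}), one application of \eqref{eqn:intPncs} turns each $s$-integration into a spherical Bessel function and the result assembles into $f_n^\alpha(x,t)$. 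A second application of \eqref{eqn:intPncs}, now pairing this against $A_k^S(x,t)$, then produces the factor $(-1)^{n+k}x$ multiplying the matrix of $j$-products whose entries are precisely those of $W_{nk}^\alpha(x)$.

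For the right-hand side, the constant matrix $R_\alpha$ factors out of the inner product and what remains is $-\tfrac12\langle F_{\alpha,0}(x,\cdot),A_k^S(x,\cdot)\rangle$, a computation of exactly the same form as the one that produced $-\tfrac{1}{2}f_k^C(x,x)^T$ in the $K_C$ derivation; the outcome is $-\tfrac12\,R_\alpha f_k^\alpha(x,x)^T$. Assembling the three contributions yields \eqref{eqn:system-Kalpha}. No conceptual obstacle is expected: the only delicate point is bookkeeping, namely that $A_n^S$ places $P_{2n+1}$ in the upper-left slot and $P_{2n}$ in the lower-right (reversed with respect to $A_n^C$), which is what forces the swap of diagonal entries between $D_k^C$ and $D_k^S$ and the transposed pattern of the Bessel functions appearing in $V_n^\alpha$ relative to $V_n^C$.
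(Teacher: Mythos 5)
Your proposal is correct and follows essentially the same route as the paper, which at this point simply says to "follow the same procedure as before": starting from the reduced equation $\sum_n \frac{1}{x}K_n^\alpha(x)[A_n^S+\int_0^x A_n^S F_{\alpha,0}\,ds]=-\frac12 R_\alpha F_{\alpha,0}$, pairing against $A_k^S(x,\cdot)$, and using Legendre orthogonality together with the integrals \eqref{eqn:intPncs} applied twice. Your bookkeeping of the swapped placement of $P_{2n+1}$ and $P_{2n}$ in $A_n^S$ (hence $D_k^S$ versus $D_k^C$ and the pattern of Bessel indices in $V_k^\alpha$ and $W^\alpha_{nk}$) matches the stated formulas.
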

	\begin{Rmk}\label{rmk:WCWalph}
		For any matrix \(A=(a_{ij})\in \mathcal{M}_2\), conjugation of \(A\) by \(B\) is as follows,
		\begin{equation}
			\begin{pmatrix}
				0 & 1 \\ -1 & 0
			\end{pmatrix} \begin{pmatrix}
				a_{11} & a_{12} \\ a_{21} & a_{22}
			\end{pmatrix}\begin{pmatrix}
				0 & 1 \\ -1 & 0
			\end{pmatrix}^T = \begin{pmatrix}
				a_{22} & -a_{21} \\ -a_{12} & a_{11}
			\end{pmatrix}.
		\end{equation}
		Thus, the expressions for \(W^C_{nk}(x)\) and \( f_k^C(x,t)\) are related to the expressions for \(W^\alpha_{nk}(x)\) and \(f_k^\alpha(x,t) \) via conjugation by \(B\) but for different spectral data.
	\end{Rmk}
	\subsection{Two possibilities for the recovery of the potential} \label{sec:recovery}
	In this section we present two different methods to recover the potential \(Q\). We focus on the system of equations for the coefficients \(K_C\). Other cases can be treated similarly. In practice, we can truncate system \eqref{eqn:main-system} and obtain a finite number of approximate coefficients \( \tilde{K}^C_n,\)  \(n=0,\ldots,N.\) We then have an approximation \(\tilde{K}^N_C\) of \(K_C\) via the \(N\)-th partial sum
	\begin{equation}
		\tilde{K}^N_C(x,t):= \sum_{n=0}^N \tilde{K}^C_n(x)A^C_n(x,t).
	\end{equation}
	Substitution of \(\tilde{K}^N_C\) for \(K_C\) in eq. \eqref{eqn:Kalphxx} and the fact that the Legendre polynomials satisfy \(P_n(1)=1\) leads to an approximate recovery \(Q^N\) of \(Q\) given by
	\begin{equation}\label{eqn:QGoursat}
		Q(x) \approx Q^N(x) := \tilde{K}^N_C(x,x)B-B\tilde{K}^N_C(x,x) = \sum_{n=0}^N \left[ \tilde{K}^C_n(x)B-B\tilde{K}^C_n(x) \right],
	\end{equation}
	and we know that \(Q^N \to Q\) as \(N \to \infty\), due to the stability of the process. One might think that in order to get a good accuracy in the approximate recovery of \(Q\), we must compute a large number of coefficients \( K^C_n \). The decay rate of the coefficients shows that at least for a smooth potential, a small number of coefficients are needed. The following result regarding the decay rate of the coefficients \(K_n^C\) in terms of the smoothness of \(Q\) is stated without proof as it is only a slight modification of \cite[Cor. 4.9]{RoqTorNSBF}.
	\begin{Prop}
		Let \(Q \in W^{p+1,2}([0,1]), p\geq 0\). Then,
		\begin{equation}
			\vert K^C_n(x) \vert \leq \frac{C(p,b,Q)x^{p+3/2}}{n^{p+1/2}}, \quad n>p+2,
		\end{equation}
		and \(C(p,b,Q)\) is a constant that depends solely on \(p,\, b\) and \(Q\).
	\end{Prop}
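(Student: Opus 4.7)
The plan is to reduce this bound to the corresponding decay estimate for the Fourier-Legendre coefficients $K_n(x)$ of the original kernel $K(x,t)$ from Theorem \ref{thm:TOP}. By \cite[Cor.~4.9]{RoqTorNSBF}, if $Q\in W^{p+1,2}([0,1],\mathcal{M}_2)$, then the matrix coefficients $K_n(x)$ in the expansion \eqref{eqn:K} satisfy a bound of the form
\[
|K_n(x)| \leq \frac{C(p,b,Q)\, x^{p+3/2}}{n^{p+1/2}}, \qquad n>p+2.
\]
This is the starting point; I would apply it without revisiting its proof, since the proposition explicitly invokes it.

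Next, I would recall the definition \eqref{eqn:Kcn} of $K_n^C(x)$: its entries are precisely four of the scalar components of $K_{2n}(x)$ and $K_{2n+1}(x)$. Since the operator norm on $\mathcal{M}_2$ dominates the absolute value of each entry up to an absolute constant, each entry of $K_n^C(x)$ is bounded by $\max\bigl(|K_{2n}(x)|,|K_{2n+1}(x)|\bigr)$, and the operator norm of $K_n^C(x)$ is in turn bounded by (a multiple of) $|K_{2n}(x)|+|K_{2n+1}(x)|$.

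Now I would invoke the cited decay bound to each of these terms. The condition $n>p+2$ in the conclusion is what allows both $2n$ and $2n+1$ to exceed $p+2$, so \cite[Cor.~4.9]{RoqTorNSBF} applies to both $K_{2n}(x)$ and $K_{2n+1}(x)$. Thus
\[
|K_n^C(x)| \leq |K_{2n}(x)|+|K_{2n+1}(x)| \leq C(p,b,Q)\,x^{p+3/2}\left(\frac{1}{(2n)^{p+1/2}}+\frac{1}{(2n+1)^{p+1/2}}\right) \leq \frac{C'(p,b,Q)\, x^{p+3/2}}{n^{p+1/2}},
\]
after absorbing the factor $2^{-(p+1/2)}+ (2n/(2n+1))^{p+1/2}\cdot 2^{-(p+1/2)}$ into the constant.

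The main (and essentially only) obstacle is bookkeeping: confirming that the cited corollary from \cite{RoqTorNSBF} is stated in a form that can be applied directly to the submatrices, and adjusting the constant and the range of $n$ so that the statement matches the prescribed form. Since the passage from $K_n(x)$ to $K_n^C(x)$ only picks out entries and couples two consecutive indices, no new analytic input is needed; this is exactly the sense in which the result is a "slight modification" of the cited one.
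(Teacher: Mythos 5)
Your proposal is correct and follows exactly the route the paper intends: the paper states the proposition without proof, remarking only that it is a slight modification of \cite[Cor.~4.9]{RoqTorNSBF}, and your argument supplies precisely that modification by bounding \(\vert K_n^C(x)\vert\) through \(\vert K_{2n}(x)\vert+\vert K_{2n+1}(x)\vert\) and applying the cited decay estimate to each term. The bookkeeping (columns of \(K_n^C\) being columns of \(K_{2n}\) and \(K_{2n+1}\), the range \(n>p+2\) guaranteeing applicability at indices \(2n\) and \(2n+1\), and absorbing the factor \(2^{-(p+1/2)}\) into the constant) is all handled correctly.
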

	Nonetheless, it is possible to recover the potential \(Q\) only from the first coefficient \( K_0^C\). 
	\begin{Prop}\label{prop:pqab}
		Let \(Q\) be a real matrix-valued potential and let \( v(x):= (a(x), b(x))^T \) be the vector which equals the first column of \( K^C_0(x) \). Then it is possible to recover \( Q \) from the following equality
		\begin{equation}\label{eqn:pqab}
			\begin{pmatrix} p \\ q \end{pmatrix} = \frac{1}{4b^2+(1+2a)^2}\begin{pmatrix} -2b & -(1+2a) \\ 1+2a & -2b \end{pmatrix} \begin{pmatrix} 2a' \\ 2b' \end{pmatrix}=\frac{2}{4b^2+(1+2a)^2}\begin{pmatrix} - 2(ab)'-b' \\  (a^2)'-(b^2)'+a' \end{pmatrix}.
		\end{equation}
	\end{Prop}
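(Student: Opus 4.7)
My approach is to evaluate the NSBF representation \eqref{eqn:nsbfCS} of $C(\lambda,x)$ at the single value $\lambda=0$. Since $j_0(0)=1$ while $j_\nu(0)=0$ for all $\nu\geq 1$, every term in the Neumann series with $n\geq 1$ vanishes, and only the $n=0$ contribution survives. Because $K_0^C(x)(1,0)^T=(K_{11}^{(0)}(x),K_{21}^{(0)}(x))^T=(a(x),b(x))^T$, this yields
\[
C(0,x)\;=\;C_0(0,x)+2K_0^C(x)\begin{pmatrix}1\\0\end{pmatrix}\;=\;\begin{pmatrix}1+2a(x)\\ 2b(x)\end{pmatrix}.
\]
Thus the first column of $K_0^C$ determines $C(0,\cdot)$ completely, which is the key observation driving the proof.

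Next I would plug $\lambda=0$ into the Dirac equation \eqref{eqn:dirac} satisfied by $C$, giving $BC'(0,x)+Q(x)C(0,x)=0$. Since $B^2=-I$, we have $B^{-1}=-B$, so $C'(0,x)=BQ(x)C(0,x)$. A direct computation produces
\[
BQ=\begin{pmatrix}q & -p\\ -p & -q\end{pmatrix},
\]
and applying this to $C(0,x)=(1+2a,2b)^T$, while using $C'(0,x)=(2a',2b')^T$, gives the pair of scalar equations
\begin{align*}
2a'(x)&=-2b(x)\,p(x)+(1+2a(x))\,q(x),\\
2b'(x)&=-(1+2a(x))\,p(x)-2b(x)\,q(x),
\end{align*}
which is a $2\times 2$ linear system for the unknown pair $(p,q)$.

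Finally, the coefficient matrix of this system has determinant $(1+2a(x))^2+(2b(x))^2=|C(0,x)|^2$, which is strictly positive on $[0,1]$: the vector $C(0,\cdot)$ solves a linear first-order ODE with nontrivial initial value $(1,0)^T$, so by uniqueness it never vanishes. Inverting the matrix then produces the first formula in \eqref{eqn:pqab}, and the equivalent second expression follows by regrouping using the product-rule identities $(ab)'=a'b+ab'$, $(a^2)'=2aa'$, $(b^2)'=2bb'$. I expect no genuine obstacle: the conceptual content is entirely in the observation that evaluating the NSBF representation at $\lambda=0$ collapses it to a single term that isolates $K_0^C$, after which the result reduces to linear algebra and routine algebraic manipulation.
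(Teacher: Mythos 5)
Your proposal is correct and follows essentially the same route as the paper: evaluate the NSBF representation at $\lambda=0$ to get $C(0,x)=(1+2a,2b)^T$, substitute into $C'(0,x)=BQ\,C(0,x)$, and invert the resulting $2\times2$ linear system, with the determinant $4b^2+(1+2a)^2$ shown positive via uniqueness of solutions of the ODE. The only detail the paper makes explicit that you leave implicit is that $a,b$ are real (since $C(0,\cdot)$ is), which is what makes the sum-of-squares positivity argument legitimate.
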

	\begin{proof}
		For \( \lambda =0 \), using eq. \eqref{eqn:nsbfCS} we have that
		\begin{equation}
			C(0,x)= C_0(0, x)+ 2 v(x), \quad  C_0(0, x) = \begin{pmatrix}
				1 \\ 0
			\end{pmatrix},
		\end{equation}
		and we can see that \(v\) is real-valued since \(C(0,x)\) is. Therefore its entries \(a,\, b\) are real. Then,
		\[
		2v'(x)=C(0,x)'=BQC(0, x)= BQ(C_0(0, x)+2v(x)),
		\]
		From here, we get that
		\[
		\begin{pmatrix} q & -p \\ -p & -q \end{pmatrix} \begin{pmatrix} 1+2a \\ 2b \end{pmatrix} = \begin{pmatrix} 2a' \\ 2b' \end{pmatrix},
		\]
		which implies that
		\[
		\begin{pmatrix} -2b & 1 + 2a \\ -(1+2a) & -2b \end{pmatrix}\begin{pmatrix} p \\ q \end{pmatrix}= \begin{pmatrix} 2a' \\ 2b' \end{pmatrix}.
		\]
		The determinant of the system is \( \Delta = 4b^2+(1+2a)^2 \). We claim that \( \Delta >0 \). Indeed, let us assume that there exists \( x_0 \in (0,1] \) such that \( \Delta(x_0)= 0 \) (\( \Delta(0)=1 \)). Then \( a(x_0)= -\frac{1}{2} \) and \( b (x_0)= 0 \). This implies \( C(0,x_0)= (0, 0)^T \). Due to the uniqueness of the solution, \( C(0,x)\equiv (0,0)^T \) which is impossible since \( C(0,0)=(1,0)^T \).
	\end{proof}
	Recovery of a potential from the first coefficient of an NSBF series has been studied in several inverse problems for the Schrödinger equation, see \cite{KravTorbaInv, krav-icp, kravchenko-gb, krav-cp}
	and the references therein. As it is shown in those works, only a handful of equations are needed to get a good recovery of the potential. This is a remarkable property of the NSBF representation of solutions that we have been able to generalize to the Dirac system. \\
	
	Similarly, we can recover the potential \(Q\) from the first coefficient \(K_0^S\) and \(K_0^\alpha.\)
	\begin{Prop}
		\begin{enumerate}
			\item 	Let \( w(x):= (c(x), d(x))^T \) be the vector which equals the second column of \( K^S_0(x) \). Then it is possible to recover \( Q \) from the following equality
			\begin{equation}\label{eqn:pqcd}
				\begin{pmatrix} p \\ q \end{pmatrix} = \frac{1}{(1+2d)^2+4c^2}\begin{pmatrix} -(1+2d) & -2c \\ 2c & -(1+2d) \end{pmatrix} \begin{pmatrix} 2c' \\ 2d' \end{pmatrix}.
			\end{equation}
			\item 	Let \( w_\alpha(x):= (c_\alpha(x), d_\alpha(x))^T \) be the vector which equals the second column of \( K^\alpha_0(x) \). Then it is possible to recover \( Q \) from the following equality
			\begin{equation}\label{eqn:pqcalpha}
				\begin{pmatrix} p \\ q \end{pmatrix} = \frac{1}{(\cos \alpha+2d_\alpha)^2+(\sin \alpha - 2c_\alpha)^2}\begin{pmatrix} -(\cos \alpha+2d_\alpha) & \sin \alpha -2c_\alpha \\ -\sin\alpha +2 c_\alpha & -(\cos \alpha+2d_\alpha) \end{pmatrix} \begin{pmatrix} 2c_\alpha' \\ 2d_\alpha' \end{pmatrix}.
			\end{equation}
		\end{enumerate}
	\end{Prop}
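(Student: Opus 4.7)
The plan is to mimic, step by step, the proof of Proposition \ref{prop:pqab}, now applied to the solutions $S(\lambda,x)$ and $\varphi_\alpha(\lambda,x)$ evaluated at $\lambda=0$. The key observation is that the NSBF representations in \eqref{eqn:nsbfCS} collapse to a single term at $\lambda=0$, because $j_0(0)=1$ and $j_n(0)=0$ for $n\geq 1$; hence only the first coefficient $K_0^S$ (respectively $K_0^\alpha$) survives, and the particular vector it is multiplied by precisely picks out its second column.

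For part (1), I first evaluate the NSBF series for $S$ at $\lambda=0$: since $S_0(0,x)=(0,1)^T$ and $(-j_1(0),j_0(0))^T=(0,1)^T$, we obtain
\[
S(0,x) = \begin{pmatrix} 0 \\ 1 \end{pmatrix} + 2K_0^S(x)\begin{pmatrix} 0 \\ 1 \end{pmatrix} = \begin{pmatrix} 2c(x) \\ 1+2d(x) \end{pmatrix},
\]
which is real because $S(0,x)$ is. Since $S$ satisfies $BS'+QS=0$ at $\lambda=0$, using $B^{-1}=-B$ gives $S'=BQS$. A direct computation of $BQ$ together with $S'(0,x)=(2c',2d')^T$ produces the $2\times 2$ linear system
\[
\begin{pmatrix} -(1+2d) & 2c \\ -2c & -(1+2d) \end{pmatrix}\begin{pmatrix} p \\ q \end{pmatrix}= \begin{pmatrix} 2c' \\ 2d' \end{pmatrix},
\]
whose determinant $(1+2d)^2+4c^2$ is strictly positive: if it vanished at some $x_0$, then $S(0,x_0)=(0,0)^T$, and uniqueness of the Cauchy problem would contradict $S(0,0)=(0,1)^T$. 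Inverting the $2\times 2$ matrix yields \eqref{eqn:pqcd}.

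For part (2) I follow the same recipe with $\varphi_\alpha$. Since $\varphi_{\alpha,0}(0,x)=(\sin\alpha,-\cos\alpha)^T$ and $(j_1(0),-j_0(0))^T=(0,-1)^T$, the NSBF expansion gives
\[
\varphi_\alpha(0,x) = \begin{pmatrix} \sin\alpha \\ -\cos\alpha \end{pmatrix} + 2K_0^\alpha(x)\begin{pmatrix} 0 \\ -1 \end{pmatrix} = \begin{pmatrix} \sin\alpha - 2c_\alpha \\ -\cos\alpha - 2d_\alpha \end{pmatrix},
\]
(selecting minus the second column of $K_0^\alpha$). Differentiating and using $\varphi_\alpha'=BQ\varphi_\alpha$ at $\lambda=0$ gives the system
\[
\begin{pmatrix} \cos\alpha+2d_\alpha & \sin\alpha-2c_\alpha \\ -(\sin\alpha-2c_\alpha) & \cos\alpha+2d_\alpha \end{pmatrix}\begin{pmatrix} p \\ q \end{pmatrix}= \begin{pmatrix} -2c_\alpha' \\ -2d_\alpha' \end{pmatrix},
\]
whose determinant $(\cos\alpha+2d_\alpha)^2+(\sin\alpha-2c_\alpha)^2$ is nonzero by the same uniqueness argument as in Proposition \ref{prop:pqab}, now applied to $\varphi_\alpha(0,x)$ with nonzero initial condition $(\sin\alpha,-\cos\alpha)^T$. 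Inversion of this matrix produces \eqref{eqn:pqcalpha}.

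The argument is entirely a translation of Proposition \ref{prop:pqab}, and contains no real obstacle. The only place where care is required is bookkeeping of signs: one must correctly identify which column of $K_0^S$ and $K_0^\alpha$ is selected by the zero-order NSBF vectors $(-j_{2n+1},j_{2n})^T$ and $(j_{2n+1},-j_{2n})^T$ at $\lambda=0$, and then match the $2\times 2$ matrix inverse to the exact algebraic form printed in the statements \eqref{eqn:pqcd} and \eqref{eqn:pqcalpha}.
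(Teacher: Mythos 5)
Your proposal is correct and follows exactly the route the paper intends: the paper's proof is the one-line remark that the argument is analogous to Proposition \ref{prop:pqab} using the NSBF representations of $S(\lambda,x)$ and $\varphi_\alpha(\lambda,x)$, and you have simply carried out that analogous computation in full, with the signs, the selected columns, and the determinant-positivity argument all checking out against \eqref{eqn:pqcd} and \eqref{eqn:pqcalpha}.
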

	\begin{proof}
		The proof is analogous to the proof of proposition \ref{prop:pqab}, using the NSBF representation for \(S(\lambda,x)\) and \(\varphi_\alpha(\lambda,x)\) respectively.
	\end{proof}
	\section{Numerical implementation}\label{sec:numerical}
	
	In this section, we present several numerical examples. The implementation of the ideas presented in the previous sections is mostly straightforward, and thus we only discuss a few details. The procedure is considerably similar to that of \cite[Section 5]{KravTorbaInv}, where a thorough description of the numerical implementation is presented, and the reader is encouraged to consult more details there. In all of our examples, \(\alpha = -\pi/2\) and it is assumed to be known. Regarding the parameter \(\beta\), we analyze two cases, see the statement of the inverse problems \ref{prob:bothbc} and \ref{prob:onebc}. First, we assume \(\beta \) to be known and equal to zero. Then, we assume \(\beta\) to be unknown and we recover it numerically. Other cases for \(\alpha\) and \(\beta\) can be treated similarly. All the code was implemented in Matlab 2024b. The main steps of the algorithm are as follows:
	\begin{enumerate}
		\item Given a finite set of spectral data
		\begin{equation}
			\{ \lambda_{m} \}_{m=-M_1}^{M_2}, 	\{ \alpha_{m} \}_{m=-M_1}^{M_2}, \qquad M_1,M_2>0,
		\end{equation}
		we consider a truncation of the system \eqref{eqn:main-system} with a number \(N_C\) of equations and on a finite mesh of points \( 0=x_0<x_1<\ldots < x_p=1\). We consider a uniform mesh in the implementation for simplicity with 80--100 points. We use \(N_C=10\) in all of our examples.  However, we would like to point out that such a number of equations was necessary only for recovery of the matrix potential with the accuracy of \(10^{-8}\) in Example \ref{ex:sincos}. As few as 3--5 equations were sufficient for all other examples.
		\item In practice only a handful of eigenvalues and norming constants are known. However, the series for the coefficients \(W^C_{nk}\)  \eqref{eqn:WCnk} and \(f_k^C(x,x)\) \eqref{eqn:fCn}  converge slowly and a large number of spectral data are needed to have a good accuracy, see remark \ref{rmk:nonconvergent}. This is why, instead of
		approximating the coefficients \(W^C_{nk}\) \eqref{eqn:WCnk} of the system by
		\begin{equation}
			\begin{split}
				W^C_{nk}(x)\approx (-1)^{n+k}x\sum_{m=-M_1}^{M_2}&\left[ \frac{1}{\alpha_m}\begin{pmatrix} j_{2n}(\lambda_m x) j_{2k}(\lambda_m x) & j_{2n}(\lambda_m x)j_{2k+1}(\lambda_m x)  \\
					j_{2n+1}(\lambda_m x) j_{2k}(\lambda_m x) & j_{2n+1}(\lambda_m x) j_{2k+1}(\lambda_m x) \end{pmatrix}\right.  \\
				&\left. - \begin{pmatrix} j_{2n}(\lambda_m^0 x) j_{2k}(\lambda_m^0 x) & j_{2n}(\lambda_m^0 x)j_{2k+1}(\lambda_m^0 x) \\
					j_{2n+1}(\lambda_m^0 x) j_{2k}(\lambda_m^0 x) & j_{2n+1}(\lambda_m^0 x) j_{2k+1}(\lambda_m^0 x) \end{pmatrix} \right],
			\end{split}
		\end{equation}
		we  follow the strategy presented in \cite[section 3.3]{KravTorbaInv} with minor modifications to improve the accuracy of the approximation: we generate a large number of asymptotic spectral data using asymptotic formulas \eqref{eqn:asym-eig} and \eqref{eqn:asym-normingc}. To be more specific, if we assume \(\beta \) to be known we can recover approximately constants \(c_j\) and \(a_j\) for \(1\leq j \leq K\) (where \(K\) depends on the smoothness of \(Q\)) from the best fit problem
		\begin{equation}\label{eqn:bestfit}
			\sum_{j=1}^{K} \frac{c_j}{m^j}=\lambda_{m}-(m\pi+(\beta-\alpha)), \qquad \sum_{j=1}^{K} \frac{a_j}{m^j}=\alpha_{m}-1.
		\end{equation}
		In the equation above \(m\) ranges in the lower half of the negative indices and the upper half of the positive indices of the spectral data. Since the smoothness of the potential is unknown beforehand, the parameter \(K\) is chosen such that the least squares error of the fit for \(K\) offers a significant improvement compared to the least squares error for \(K-1\).
		In the case where \( \beta \) is unknown, this parameter can be recovered approximately too from the best fit problem
		\begin{equation}
			\beta+\sum_{j=1}^{K} \frac{c_j}{m^j}=\lambda_{m}-(m\pi-\alpha).
		\end{equation}
		\item As already described in section \ref{sec:recovery} there are two possibilities for recovering the potential matrix \(Q.\) The first one, given by \eqref{eqn:QGoursat} is straightforward to implement. For the second one, namely, the recovery of the potential from the first coefficient \(K_0^C\) and formula \eqref{eqn:pqab} we interpolate the values for the first column of \(K_0^C\) using a 6th order spline and Matlab's function \texttt{fnder} to perform the numerical differentiation. As we can see from example \ref{ex:example1} the method based on spline differentiation works significantly better.
	\end{enumerate}
	\begin{Rmk}
		In all of the numerical examples, we consider \(M_1=M_2\) to benefit from the convergence improvement mentioned in remark \ref{rmk:nonconvergent}, i.e., the amount of positive and negative spectral data is the same. Thus, we have an odd number of spectral data in all of the examples. For instance, when we say that we used 201 spectral data, we are considering the eigenvalues and norming constants indexed from -100 to 100, and so on. The asymptotic spectral data is also generated to satisfy this condition.
	\end{Rmk}
	We would like to mention that the most time consuming part of the proposed algorithm is the computation of the spherical Bessel functions. For that reason, they have been computed recursively, as explained in \cite[p. 23]{KravTorbaInv}. Even with such optimization, the computation of the spherical Bessel functions constitutes at least 80\% of the execution time. Therefore, for practical applications where both boundary conditions are fixed and known, storage of the terms \(j_n(\lambda_m^0 x)\) in memory should result in a considerable improvement of the execution time.  In all of our examples, even those where we assume both \(\alpha\) and \(\beta\) to be known the computation time includes the calculation of all the expressions, including the terms involving \(\lambda_{m}^0\), just as if \(\beta\) was not known beforehand.  \\
	
	The exact spectral data has been generated using the procedure proposed in \cite{RoqTorNSBF}, which we use to obtain the eigenvalues and eigenfunctions. The norming constants are then obtained using numerical integration with a Newton-C\^otes 6-point formula. All the computations were performed on a laptop with an 11th Gen Intel Core i7-1185G7 CPU and 32 GB of RAM.
	\begin{Ex}\label{ex:example1}
		Consider the spectral problem \(L(p,q,-\pi/2,0)\) with
		\begin{equation}\label{eqn:example1}
			p(x)=-\frac{(x+1)}{2} \cos \left( \frac{x(x-2)}{2}\right), \quad q(x)=\frac{(x+1)}{2} \sin \left( \frac{x(x-2)}{2}\right).
		\end{equation}
		In figure \ref{fig:Ex1exact}, we present the absolute error of the recovery of the entries \(p\) and \(q\) using 201, 1001, and 2001 exact spectral data pairs. On the left of the figure we present the absolute error of the method based on spline differentiation and formula \eqref{eqn:pqab}. On the right, we present the absolute error of the method based on the Goursat characteristic equation \eqref{eqn:QGoursat}. Although the recovery of \(q\) has comparable accuracy, for \(p\) the absolute error of the recovery is significantly better using spline differentiation. The execution time using 2001 spectral data is around 0.6 seconds. \\
		
		As we have already mentioned, in practice only a small amount of spectral data is available. In figure \ref{fig:Ex1asym} we show the absolute error of the recovery based on spline differentiation using 2001 exact eigenvalues with no asymptotic eigenvalues (AEVs). This is compared to the absolute error of the recovery using 51 and then only 11 exact eigenvalues and generating enough AEVs to have 10,001 spectral data in total indexed from -5000 to 5000. The execution time of the algorithm using 10,001 eigenpairs is around 2.2 seconds. As we can observe, a small number of spectral data is enough to recover the matrix potential. The value of the parameter \(K\) appearing in \eqref{eqn:bestfit} for the case of 11 exact eigenvalues was 5, whereas for 51 exact eigenvalues was 8.
		\begin{figure}
			\centering
			\includegraphics[width=5in,height=4in]{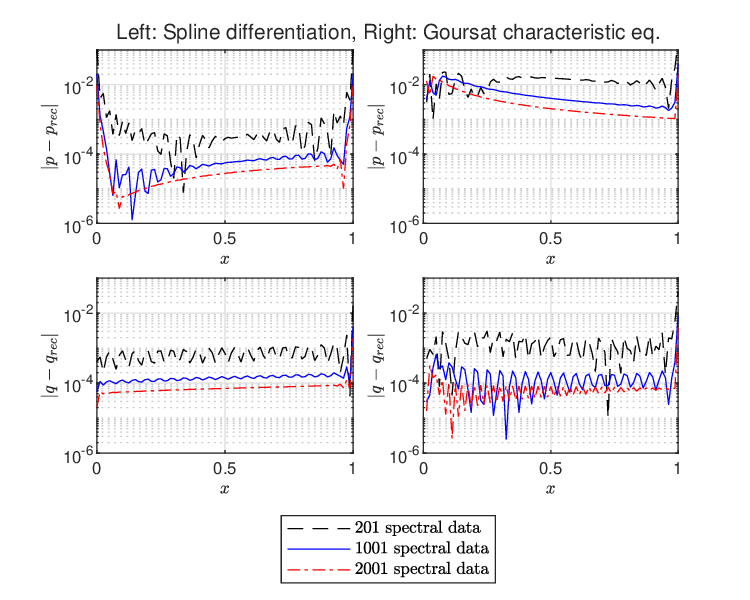}
			\caption{Absolute error of the method based on spline differentiation (left) vs the method based on the Goursat characteristic equation (right) for example \ref{ex:example1}.}
			\label{fig:Ex1exact}
		\end{figure}
		\begin{figure}
			\centering
			\includegraphics[width=5in,height=2.4in]{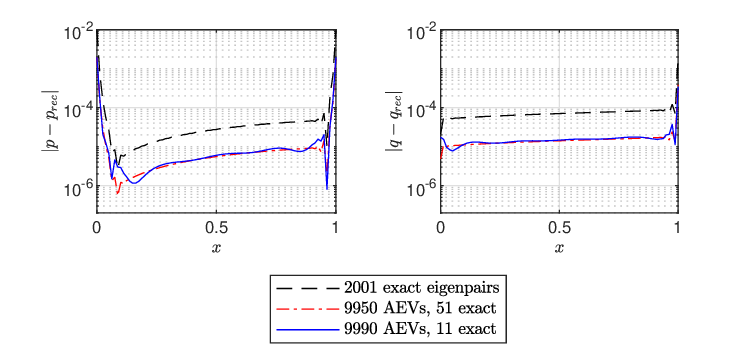}
			\caption{Absolute error of the recovery of the potential \eqref{eqn:example1} based on spline differentiation using 2001 exact eigenvalues with no asymptotic eigenvalues (AEVs). This is compared to the absolute error of the recovery using 51 and 11 exact eigenvalues and generating enough AEVs to have 10,001 spectral data in total. }
			\label{fig:Ex1asym}
		\end{figure}
	\end{Ex}
	
	\begin{Ex}
		Consider the boundary value problem \(L(p,q,-\pi/2,0)\) with
		\begin{equation}\label{eqn:example2}
			p(x)=\left\vert 3 - \left\vert 9x^2-3 \right\vert \right\vert , \quad q(x)=-\left\vert x-\frac{1}{2}\right\vert.
		\end{equation}
		Then, \(Q \in W^{1,2}([0,1],\mathcal{M}_2)\). In figure \ref{fig:Ex2} we show the absolute error of the recovery based on spline differentiation using 21 exact eigenvalues and generating enough AEVs to have 5,001 spectral data in total indexed from -2500 to 2500. The execution time of the algorithm is 1.3 seconds approximately. We would like to point out that the parameter \(K\) appearing in \eqref{eqn:bestfit} is found to be 1 as we would expect.
		\begin{figure}
			\centering
			\includegraphics[width=5in,height=2in]{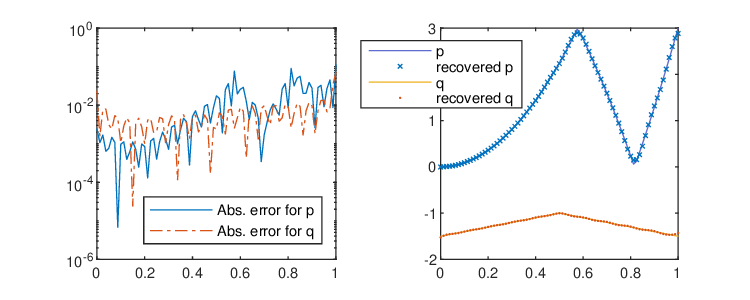}
			\caption{Left: Absolute error of the recovery of the potential \eqref{eqn:example2} based on spline differentiation using 21 exact eigenvalues and 4980 AEVs to have 5,001 spectral data in total.  Right: Exact and recovered potential.}
			\label{fig:Ex2}
		\end{figure}
	\end{Ex}
	\begin{Ex}
		For this example we have taken inspiration in the excellent work \cite{Trefethen}. Consider the boundary value problem \(L(p,q,-\pi/2,0)\) with a sawtooth potential entry for \(Q\),
		\begin{equation}\label{eqn:example3}
			p(x)=\left\vert 3 - \left\vert 9x^2-3 \right\vert \right\vert , \quad q(x)=-10 \int_{0}^x \mathrm{sign} \left( \sin \left(\frac{10 \pi x}{4-\pi x}\right)\right) \, dx.
		\end{equation}
		Then, \(Q \in W^{1,2}([0,1],\mathcal{M}_2)\). In figure \ref{fig:Ex3} we show the absolute error of the recovery based on spline differentiation using 41 exact eigenvalues and generating enough AEVs to have 10,001 spectral data in total. Using less exact eigenvalues leads to a bad recovery for \(q\) near the point \(x=1.\)  The execution time was around 2.3 seconds. The parameter \(K\) appearing in \eqref{eqn:bestfit} is also found to be 1 for this example as expected. \\
		
		Also, we consider the problem \(L(p,q,-\pi/2,\beta)\) assuming that the parameter \(\beta=\pi/4\) is not known beforehand. We considered 41 exact eigenvalues to be known. We obtained \(\vert \beta - \beta_{rec} \vert = 9.1\cdot 10^{-4}.\) As we can see from figure \ref{fig:Ex3beta} we obtain a similar accuracy to the previous scenario, adding enough AEVs to have 10,001 spectral data in total again. The computational time was 2.6 seconds.
		\begin{figure}
			\centering
			\includegraphics[width=5in,height=2in]{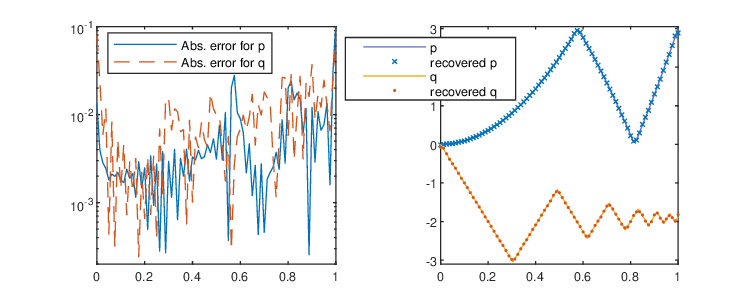}
			\caption{Absolute error of the recovery of the potential \eqref{eqn:example3} based on spline differentiation using 41 exact eigenvalues and complemented with 9960 AEVs to have 10,001 spectral data in total.  Right: Exact and recovered potential.}
			\label{fig:Ex3}
		\end{figure}
		\begin{figure}
			\centering
			\includegraphics[width=5in,height=2in]{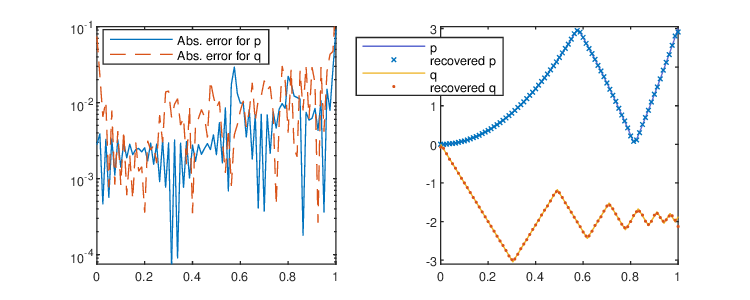}
			\caption{Left: Absolute error of the recovery of the potential \eqref{eqn:example3} based on spline differentiation using 41 exact eigenvalues and 9960 AEVs to have 10,001 spectral data in total. The parameter \(\beta=\pi/4\) is assumed to be unknown, \(\vert \beta - \beta_{rec} \vert = 9.1\cdot 10^{-4}.\)   Right: Exact and recovered potential. }
			\label{fig:Ex3beta}
		\end{figure}
	\end{Ex}
	
	\begin{Ex}\label{ex:sincos}
		Now let us consider a smooth oscillatory potential. For this example we assume \(\beta=0\) and known, that is, \(L(p,q,-\pi/2,0)\) with
		\begin{equation}\label{eqn:example4}
			p(x)=\sin 6x, \quad q(x)=\cos 8x.
		\end{equation}
		The aim of this example is to analyze what the parameter \(K\) appearing in \eqref{eqn:bestfit} is found to be by the algorithm for such a smooth potential. It turns out that given a sufficient amount of spectral data it works surprisingly well. Using 41 exact eigenvalues we obtain \(K=10.\) In figure \ref{fig:Ex4} we show the absolute error of the recovery based on spline differentiation using 41 exact eigenvalues and generating enough AEVs to have 10,001 spectral data in total. As we can see, the accuracy is remarkable for the majority of the interval, although it decreases significantly as we approach the point \(x=1\). The execution time is 2.6 seconds for this example.
		\begin{figure}
			\centering
			\includegraphics[width=5in,height=2in]{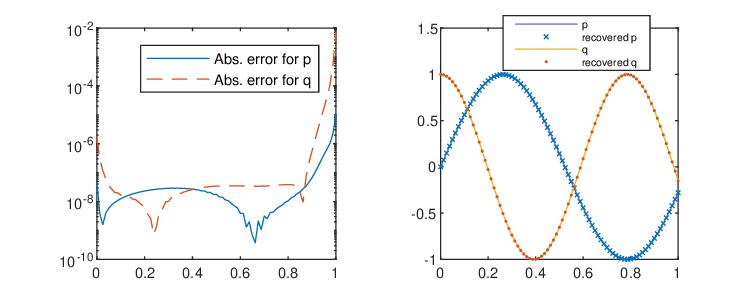}
			\caption{Left: Absolute error of the recovery of the potential \eqref{eqn:example4} based on spline differentiation using 41 exact eigenvalues and 9960 AEVs to have 10,001 spectral data in total.  Right: Exact and recovered potential.}
			\label{fig:Ex4}
		\end{figure}
	\end{Ex}
	\begin{Rmk}
		We observed numerically that whenever the coefficient \(a_1\) from \eqref{eqn:asym-normingc} is equal to 0, the performance of the proposed method greatly improves. One of the reasons for this is presented in remark \ref{rmk:nonconvergent}.  We conjecture that \(a_1 = -p(0) / \pi\) (we have strong numerical evidence for this claim, but we think the proof and detailed study of possible use of this formula are the subjects for a separate paper).
		In Example \ref{ex:sincos} the potential satisfies \(p(0) = 0\), and the recovered coefficient \(a_1\) is of order \(10^{-10}\), resulting in better numerical performance of the proposed method.
	\end{Rmk}
	\appendix
	\section{The Bubnov-Galerkin process for stationary problems}\label{section:appendix-bg}
	Following \cite{mikhlin} we recall the Bubnov-Galerkin method for stationary problems. We consider the process for less general conditions than those appearing in \cite{mikhlin} for the sake of both clarity and brevity.  Let us consider an operator \( \mathcal{L}\) of the form \( \mathcal{L} = \mathcal{I} +\mathcal{K}\) defined on a separable Hilbert space \( \mathbb{H}\) whose inner product we denote by \( (\cdot, \cdot)\), \( \mathcal{I}\) denotes the identity operator and \(\mathcal{K}\) stands for a compact operator. Let us consider the equation
	\begin{equation}\label{eqn:bgmetheq}
		\mathcal{L}u-f = 0 \iff u+\mathcal{K}u -f = 0.
	\end{equation}
	Choose a coordinate system \(\{ \phi_k\}\) of \(\mathbb{H}\) such that
	\begin{enumerate}
		\item For arbitrary \(n\), the elements \( \phi_1, \phi_2, \ldots, \phi_n\) are linearly independent.
		\item \( \{ \phi_k\}\) is complete.
		\item \( \{\phi_k\} \) is strongly minimal. In particular, orthonormal systems are strongly minimal.
	\end{enumerate}
	An approximate solution of \eqref{eqn:bgmetheq} is sought in the form
	\begin{equation}\label{eqn:un-bg}
		u_n = \sum_{k=1}^{n}a_k^{(n)} \phi_k,
	\end{equation}
	and the coefficients \(a_k^{(n)} \) are such that after substitution of \(u_n\) for \(u\) in \eqref{eqn:bgmetheq}, the left-hand side of the equation is orthogonal to \( \phi_1, \phi_2, \ldots, \phi_n\). Then, the Bubnov-Galerkin system takes the form
	\begin{equation}
		\sum_{k=1}^{n} \left[ (\phi_k,\phi_j)+(K\phi_k,\phi_j) \right]a_k^{(n)} = (f,\phi_j), \quad j=1,\ldots,n.
	\end{equation}
	The following stability property of the Bubnov-Galerkin method is known.
	\begin{Thm}[{\cite[Thm. 14.1]{mikhlin}}]
		Let \(\{ \phi_k\}\) be a coordinate system that satisfies the three properties stated above. If equation \eqref{eqn:bgmetheq} has a unique solution, then the Bubnov-Galerkin method is stable.
	\end{Thm}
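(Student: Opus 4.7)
The plan is to prove that, under the stated hypotheses, the finite-dimensional operator arising from the Bubnov--Galerkin discretization is invertible for all sufficiently large \(n\) with an inverse bounded \emph{uniformly} in \(n\); once this operator-level uniform bound is in hand, strong minimality of \(\{\phi_k\}\) transfers it to uniform bounds on the condition number of the coefficient matrix, which is exactly what stability means. The key objects are the orthogonal projections \(P_n\) onto \(V_n=\operatorname{span}(\phi_1,\dots,\phi_n)\); the Bubnov--Galerkin characterization of \(u_n\) says precisely that \(u_n\in V_n\) and \(P_n(\mathcal{I}+\mathcal{K})u_n=P_n f\), so the whole question reduces to studying the family of operators \(\mathcal{L}_n:=P_n(\mathcal{I}+\mathcal{K})|_{V_n}\) acting on \(V_n\).

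The main technical ingredient will be the classical fact that, for compact \(\mathcal{K}\) and projections \(P_n\) converging strongly to the identity (which holds here because \(\{\phi_k\}\) is complete), one has \(\|P_n\mathcal{K}-\mathcal{K}\|\to 0\) in operator norm. I would prove this by using precompactness of \(\mathcal{K}(B_1)\): cover it by finitely many \(\varepsilon\)-balls around \(w_1,\dots,w_m\), use strong convergence \(P_n w_i\to w_i\) (uniform in the \emph{finite} index \(i\)), and combine with \(\|P_n\|\le 1\). From this I would deduce the uniform invertibility of \(\mathcal{L}_n\) by a compactness/contradiction argument: if no uniform bound existed, there would be \(v_n\in V_n\) with \(\|v_n\|=1\) and \(\mathcal{L}_n v_n=v_n+P_n\mathcal{K}v_n\to 0\); extracting a subsequence along which \(\mathcal{K}v_n\to w\) (possible since \(\mathcal{K}\) is compact and \(\{v_n\}\) is bounded), the norm convergence above gives \(P_n\mathcal{K}v_n\to w\), hence \(v_n\to -w\), and applying \(\mathcal{K}\) yields both \(\mathcal{K}v_n\to -\mathcal{K}w\) and \(\mathcal{K}v_n\to w\), so \((\mathcal{I}+\mathcal{K})w=0\). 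The uniqueness hypothesis on \eqref{eqn:bgmetheq} then forces \(w=0\), contradicting \(\|v_n\|=1\).

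With \(\sup_n \|\mathcal{L}_n^{-1}\|<\infty\) established, I would finish by writing the Bubnov--Galerkin system in coordinates as \((G_n+\mathcal{K}_n)a^{(n)}=b^{(n)}\), where \(G_n=[(\phi_k,\phi_j)]\) is the Gram matrix and \((\mathcal{K}_n)_{jk}=(\mathcal{K}\phi_k,\phi_j)\). Strong minimality supplies a uniform bound on \(\|G_n^{-1}\|\); combining this with the uniform bound on \(\|\mathcal{L}_n^{-1}\|\) yields a uniform bound on the condition number of the discrete system and therefore continuous dependence of \(a^{(n)}\) on perturbations of the matrix entries and the right-hand side, which is stability in Mikhlin's sense. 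The hardest step is unquestionably the compactness/contradiction argument for uniform invertibility: every other piece (norm convergence \(P_n\mathcal{K}\to \mathcal{K}\), passage to matrix form via strong minimality) is either a standard Hilbert-space lemma or pure bookkeeping, whereas that step is where the Fredholm alternative for \(\mathcal{I}+\mathcal{K}\) is actually exploited.
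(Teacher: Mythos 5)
The paper does not prove this statement at all: it is quoted verbatim from Mikhlin's book (Theorem 14.1 of \cite{mikhlin}) and the appendix presents it explicitly as a known result, so there is no in-paper proof to compare against. Judged on its own, your outline is the standard and essentially correct argument for Galerkin discretizations of second-kind equations: the reduction to \(\mathcal{L}_n=(\mathcal{I}+P_n\mathcal{K})|_{V_n}\), the norm convergence \(\Vert P_n\mathcal{K}-\mathcal{K}\Vert\to 0\) obtained from precompactness of \(\mathcal{K}(B_1)\) together with strong convergence \(P_n\to\mathcal{I}\) (which does follow from completeness of \(\{\phi_k\}\), since \(\bigcup_n V_n\) is dense), and the compactness-plus-Fredholm contradiction argument giving \(\inf_{v\in V_n,\Vert v\Vert=1}\Vert\mathcal{L}_n v\Vert\geq c>0\) for large \(n\) are all sound; injectivity on the finite-dimensional \(V_n\) then yields invertibility with \(\Vert\mathcal{L}_n^{-1}\Vert\leq 1/c\).

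One point in the last step is stated more strongly than you can justify from the three listed hypotheses. Strong minimality gives only a uniform \emph{lower} bound \(\lambda_{\min}(G_n)\geq\lambda_0>0\); a uniformly bounded \emph{condition number} of \(A_n=G_n+\mathcal{K}_n\) would additionally require \(\Vert A_n\Vert\) bounded above, i.e.\ an upper bound on \(\lambda_{\max}(G_n)\) (almost orthonormality), which is not among the assumptions. What you actually need, and what does follow, is a uniform bound on \(\Vert A_n^{-1}\Vert\) in the spectral norm: for a coefficient vector \(a\) with \(u=\sum a_k\phi_k\) one has \(\Vert A_n a\Vert_{\ell^2}\geq\sqrt{\lambda_0}\,\Vert\mathcal{L}_n u\Vert\geq\sqrt{\lambda_0}\,c\,\Vert u\Vert\geq\lambda_0 c\,\Vert a\Vert_{\ell^2}\), which together with solvability of the perturbed system for small perturbations gives exactly the perturbation estimate that constitutes stability in Mikhlin's sense. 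In the application made in this paper the issue is invisible anyway, since the normalized system \(\{\widehat{A}_k^C\}\) is orthonormal and \(G_n=I\); but if you want your argument to prove the theorem as stated, replace the condition-number claim by the uniform bound on \(\Vert A_n^{-1}\Vert\).
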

	
	\begin{Rmk}\label{rmk:stability}
		As we can see from theorem \ref{thm:glm}, the Gelfand-Levitan equation possesses a unique solution. Moreover, it follows from eq. \eqref{eqn:GL-IF} that the Gelfand-Levitan equation is of the form \eqref{eqn:bgmetheq}. Hence, application of the Bubnov-Galerkin method using a suitable coordinate system, such as the normalized Legendre polynomials, is stable.
	\end{Rmk}

	\bibliographystyle{plain}
	
\end{document}